\def \N {\mathbb{N}}
\def \R {\mathbb{R}}
\newcommand{\Fucik}{Fu\v c\'\i k }
\newcommand{\half}{\frac{1}{2}}
\newcommand{\PS}[1]{$(\text{PS})_{#1}$}
\newcommand{\pnorm}[2][]{\if #1'' \left|#2\right|_p \else \left|#2\right|_{#1} \fi}
\newcommand{\set}[1]{\left\{#1\right\}}
\newcommand\redsout{\bgroup\markoverwith{\textcolor{red}{\rule[0.5ex]{2pt}{0.4pt}}}\ULon}
\theoremstyle{definition}
\newtheorem{definition}{Definition}[section]
\newtheorem{remark}[definition]{Remark}
\theoremstyle{plain}
\newtheorem{theorem}[definition]{Theorem}
\newtheorem{proposition}[definition]{Proposition}
\newtheorem{lemma}[definition]{Lemma}
\newtheorem{corollary}[definition]{Corollary}
\numberwithin{equation}{section}
\renewcommand{\le}{\leqslant}
\renewcommand{\ge}{\geqslant}
 \title[Superposition operators of mixed order and jumping nonlinearities]{An existence theory \\
for superposition operators of mixed order\\
subject to jumping nonlinearities}
\author{Serena Dipierro}
\address{Serena Dipierro: Department of Mathematics and Statistics, The University of Western Australia, 35 Stirling Highway, Crawley, Perth, WA 6009, Australia}
\email{serena.dipierro@uwa.edu.au}
\author{Kanishka Perera}
\address{Kanishka Perera: Department of Mathematical Sciences, Florida Institute of Technology, 150 W University Blvd, Melbourne, FL 32901-6975, USA}
\email{kperera@fit.edu}
\author{Caterina Sportelli}
\address{Caterina Sportelli: Department of Mathematics and Statistics, The University of Western Australia, 35 Stirling Highway, Crawley, Perth, WA 6009, Australia}
\email{caterina.sportelli@uwa.edu.au}
\author{Enrico Valdinoci}
\address{Enrico Valdinoci: Department of Mathematics and Statistics, The University of Western Australia, 35 Stirling Highway, Crawley, Perth, WA 6009, Australia}
\email{enrico.valdinoci@uwa.edu.au}
 \date{}
\begin{document}
 \maketitle

\begin{abstract}
We consider a superposition operator of the form
$$ \int_{[0, 1]} (-\Delta)^s u\, d\mu(s),$$
for a signed measure~$\mu$ on the interval of fractional exponents~$[0,1]$,
joined to a nonlinearity whose term of homogeneity equal to one is ``jumping'', i.e.
it may present different coefficients in front of the negative and positive parts.

The signed measure is supposed to possess a positive contribution coming from the higher exponents that overcomes its negative contribution (if any).

The problem taken into account is also of ``critical'' type, though in this case the critical exponent
needs to be carefully selected in terms of the signed measure~$\mu$.

Not only the operator and the nonlinearity considered here are very general,
but our results are new even in special cases of interest and include known results as particular subcases.

The possibility of considering operators ``with the wrong sign'' is also a complete novelty in this setting.
\end{abstract}
 
\section{Introduction} 

The aim of this paper is to address the study of critical problems involving a nonlocal operator
obtained through the linear superposition of fractional operators of different orders.

Specifically,
we consider two nonnegative finite (Borel) measures~$\mu^+$ and~$\mu^-$ in~$[{{ 0 }}, 1]$,
as well as the corresponding signed measure~$\mu:=\mu^+ -\mu^-$.

The main operator of interest for us takes the form
\begin{equation}\label{AMMU} A_\mu u:=\int_{[{{ 0 }}, 1]} (-\Delta)^s u\, d\mu(s) .\end{equation}
As customary, the notation~$(-\Delta)^s$ is reserved to the fractional Laplacian, defined, for all~$s\in(0,1)$ as
\begin{equation}\label{AMMU0} (- \Delta)^s\, u(x) = c_{N,s}\int_{\R^N} \frac{2u(x) - u(x+y)-u(x-y)}{|y|^{N+2s}}\, dy.
\end{equation}
The positive normalizing constant~$c_{N,s}$ is chosen in such a way that, for~$u$ smooth and rapidly decaying,
the Fourier transform of~$(-\Delta)^s u$ returns~$(2\pi|\xi|)^{2s}$ times the Fourier transform of~$u$
and provides
consistent limits as~$s\nearrow1$ and as~$s\searrow0$, namely
$$ \lim_{s\nearrow1}(-\Delta)^su=(-\Delta)^1u=-\Delta u
\qquad{\mbox{and}}\qquad
\lim_{s\searrow0}(-\Delta)^s u=(-\Delta)^0u=u.
$$

Particular cases for the operator in~\eqref{AMMU} are (minus) the Laplacian (corresponding to the choice of~$\mu$ being the
Dirac measure concentrated at~$1$), the fractional Laplacian~$(-\Delta)^{s_\star}$
(corresponding to the choice of~$\mu$ being the
Dirac measure concentrated at some fractional power~$s_\star$),
the ``mixed order operator'' $-\Delta+(-\Delta)^{s_\star}$
(when~$\mu$ is the sum of two Dirac measures), etc.

The ``continuous'' superposition of operators of different fractional orders has also been recently considered in the literature, see e.g.~\cite{MR3485125}.

A list of interesting cases for this operator will be discussed in detail in Section~\ref{EXA:A}.

For the moment, let us recall that operators arising from the superpositions of local and nonlocal operators
are a topic intensively studied in the contemporary research, under different perspectives, including
regularity theory (see~\cite{MR2911421, MR4381148, MR4387204, MR4469224, MR4530314, MIN}),
existence and nonexistence results (see~\cite{MR4275496}),
viscosity solution theory (see~\cite{MR2129093, MR2653895}),
symmetry results (see~\cite{MR4313576}), geometric and variational inequalities (see~\cite{MR4391102, FABE}), etc.

Moreover, operators of this type naturally arise in concrete applications: for instance they model the dispersal of a biological population whose individuals are subject to different kinds of diffusive strategies (such as Gaussian and L\'evy flights), see~\cite{MR4249816, EDOARDO}.

Interestingly, the signed measure~$\mu$ allows each single fractional Laplacian to take part in~\eqref{AMMU} with possibly different signs. However, in our setting, we will assume that
the interval~$[{{ 0 }}, 1]$ can be divided into two subintervals, the one on the right of a specific sign
and suitably dominating the one on the left. More precisely,
we assume that there exist~$\overline s\in ({{ 0 }}, 1]$ and~$\gamma\ge0$ such that
\begin{equation}\label{mu00}
{\mu^+}([\overline s, 1])>0,
\end{equation}
\begin{equation}\label{mu3}
{\mu^-}_{\big|_{[\overline s, 1]}}=0
\end{equation}
and
\begin{equation}\label{mu2}
\mu^-\big([{{ 0 }}, \overline s]\big)\le\gamma
\mu^+\big([\overline s, 1]\big).
\end{equation}
Roughly speaking, conditions~\eqref{mu00} and~\eqref{mu3} state that the component of the signed measure~$\mu$ supported
on higher fractional exponents is positive, and condition~\eqref{mu2} prescribes that the
negative components of the signed measure~$\mu$ (if any) must be conveniently ``reabsorbed'' into the positive ones.
Our main assumption will thus be that~$\gamma$ in~\eqref{mu2} is sufficiently small.
\medskip

It is worth pointing out that, by assumption~\eqref{mu00}, 
there exists~$s_\sharp\in [\overline s, 1]$ such that
\begin{equation}\label{scritico}
\mu^+ ([s_\sharp, 1])> 0.
\end{equation}
We will see below that the exponent~$s_\sharp$ plays the role of a critical exponent
(therefore, roughly speaking, while some arbitrariness is allowed in the choice of~$s_\sharp$ here above,
the results obtained will be stronger if one picks~$s_\sharp$ ``as large as possible''
but still fulfilling~\eqref{scritico}).
\medskip

In this paper,
we investigate the existence of nontrivial solutions of the critical growth elliptic problem with a jumping nonlinearity
\begin{equation} \label{mainab}
\left\{\begin{aligned}
\int_{[{{ 0 }}, 1]} (-\Delta)^s u\, d\mu(s) & = bu^+ - au^- + |u|^{2_{s_\sharp}^\ast - 2}\, u && \text{in } \Omega,\\
u & = 0 && \text{in } \R^N \setminus \Omega,
\end{aligned}\right.
\end{equation}
where~$\Omega$ is an open bounded subset of~$\R^N$, with~$N\ge 3$.
Moreover, $$2_{s_\sharp}^\ast = \frac{2N}{N - 2s_\sharp}$$ is the fractional critical Sobolev exponent
corresponding to the parameter~$s_\sharp$, $a$ and~$ b $ are positive real numbers
and~$u^\pm := \max \set{\pm u,0}$ are the positive and negative parts of~$u$, respectively.

When~$a = b $, the right-hand side of the equation in~\eqref{mainab} reduces to a Br{\'e}zis-Nirenberg 
nonlinearity.
When~$a\ne b$, an additional difficulty arises in the analysis of~\eqref{mainab},
due to the lack of regularity of the source term.
As a matter of fact, the study of non-differentiable nonlinearities (sometimes called
``jump'' nonlinearities) has a very consolidated tradition,
see e.g.~\cite{MR499709, MR1181350, MR1215262, MR1725568}.
Interestingly, this type of nonlinearity frequently occurs in concrete problems, including singular perturbations of classical nonlinearities, plasma problems,
mathematical biology, etc., see~\cite{MR1303035}.
\medskip

Our goal in this paper is to find pairs~$(a,b)$ which guarantee the existence of nontrivial solutions for~\eqref{mainab}.
To this end, we need to identify suitable regions of the space of parameters (corresponding to~$\R^2$)
that are conveniently related to the spectral properties of the operator~$A_\mu$ in~\eqref{AMMU}.
To this end, we briefly recall the construction of the minimal and maximal curves of the Dancer-\Fucik spectrum (see~\cite[Chapter 4]{MR3012848} for a general setting). 

One looks at the  operator~$A_\mu$ in~\eqref{AMMU}
(the setting can be actually generalized to include the case of monotone, self-adjoint operators with compact inverse, coupled to potentials).
The classical spectrum of~$A_\mu$ consists of isolated eigenvalues~$\lambda_l$, with~$ l \ge 1$, with finite multiplicity, satisfying~$0 < \lambda_1 < \cdots < \lambda_l < \cdots$. 

Instead, the Dancer-\Fucik spectrum of~$A_\mu$
consists of the couples~$(a,b) \in \R^2$ for which the equation
\begin{equation} \label{2.3}
A_\mu u = bu^+ - au^-
\end{equation}
has a nontrivial solution.

The Dancer-\Fucik spectrum is a closed subset of~$\R^2$ (see~\cite[Proposition 4.4.3]{MR3012848}).
We also point out that equation~\eqref{2.3} reduces to~$A_\mu u = \lambda u$ when~$a = b = \lambda$, and therefore the Dancer-\Fucik spectrum of~$A_\mu$ contains
points of the form~$(\lambda_l,\lambda_l)$.

The Dancer-\Fucik spectrum presents an interesting geometry, see~\cite[Theorem 4.7.9]{MR3012848}.
Namely, there exist two continuous and strictly decreasing functions~$\nu_{l-1}$
and~$\mu_l$, such that:
\begin{itemize}
\item for all~$a\in(\lambda_{l-1},\lambda_{l+1})$, we have that~$\nu_{l-1}(a)\le\mu_l(a)$,
\item $\nu_{l-1}(\lambda_l) = \lambda_l=\mu_l(\lambda_l)$,
\item for all~$a\in(\lambda_{l-1},\lambda_{l+1})$, we have that both~$(a,\nu_{l-1}(a))$ and~$(a,\mu_{l}(a))$
belong to the Dancer-\Fucik spectrum,
\item if~$a\in(\lambda_{l-1},\lambda_{l+1})$ and~$b\in (\lambda_{l-1},\lambda_{l+1})$,
with either~$b < \nu_{l-1}(a)$ or~$b > \mu_l(a)$, then~$(a,b)$ does not belong to the Dancer-\Fucik spectrum.
\end{itemize}

In particular, setting, for any~$ l \ge 2$,
\begin{equation}\label{QELL}
Q_l := (\lambda_{l-1},\lambda_{l+1}) \times (\lambda_{l-1},\lambda_{l+1}),
\end{equation}
we have that the graphs of~$\nu_{l-1}$ and~$\mu_l$
are strictly decreasing curves in~$Q_l$ that belong to the Dancer-\Fucik spectrum.
Also, both these curves pass through the point~$(\lambda_l,\lambda_l)$, while the region~$\set{(a,b) \in Q_l : b < \nu_{l-1}(a)}$ below the lower curve and the region~$\set{(a,b) \in Q_l : b > \mu_l(a)}$ above the upper curve lie outside the Dancer-\Fucik spectrum.

Points in the region~$\set{(a,b) \in Q_l : \nu_{l-1}(a) < b < \mu_l(a)}$ between these two graphs
(when such region is nonempty) may or may not belong to the Dancer-\Fucik spectrum.\medskip

The geometry related to the Dancer-\Fucik spectrum is sketched in Figure~\ref{FI11}.

\begin{figure}[h]
\begin{center}
\includegraphics[scale=.25]{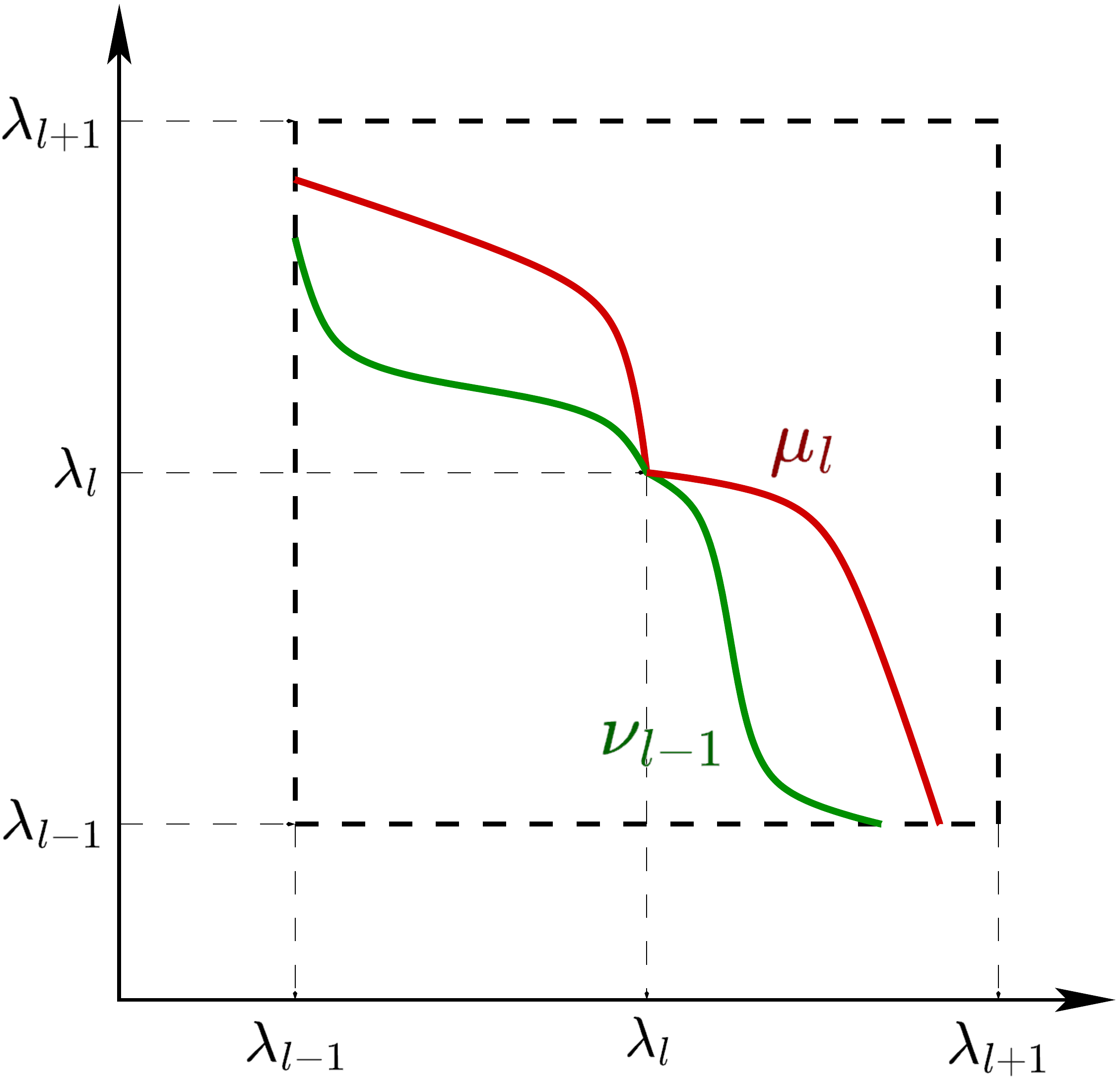}
\end{center}
\caption{Upper (in red) and lower (in green) curves of the Dancer-\Fucik spectrum.
The points of this spectrum can only lie within these two curves.}\label{FI11}
\end{figure}

For our purposes, for all~$ l \ge 2$, the region in~$Q_l$ below the lower curve of the Dancer-\Fucik spectrum
is of particular importance, since
a portion of this region contains the pairs~$(a,b)$ allowing for nontrivial solutions of~\eqref{mainab}.

To describe this portion of the plane, we define
\begin{equation}\label{best_rho}\begin{split}{\mathcal{S}}:=&\inf\Bigg\{
\mu^+(0)\,\|u\|^2_{L^2(\Omega)}
+\mu^+(1)\,\|\nabla u\|^2_{L^2(\Omega)}
\\&\qquad\qquad+
\int_{({{ 0 }},1)}\left[
{c_{N,s}} \iint_{\R^{2N}} \frac{| u(x)-u(y)|^2}{|x-y|^{N+2s}}\, dx\,dy\right]\,d\mu^+(s)
\Bigg\}.\end{split}\end{equation}
The infimum above\footnote{{F}rom now on, \label{footpahetehfh}
with a slight abuse of notation,
the quantity in brackets in formula~\eqref{best_rho} (and similar quantities)
will be abbreviated into
$$ \int_{[{{ 0 }},1]}\left[
{c_{N,s}} \iint_{\R^{2N}} \frac{| u(x)-u(y)|^2}{|x-y|^{N+2s}}\, dx\,dy\right]\,d\mu^+(s), $$
with the understanding that the measure evaluation at~$s=0$ and~$s=1$
(if nonvoid) returns the classical expressions.} is taken over all the functions~$u\in C^\infty_0(\Omega)$ satisfying~$\|u\|_{L^{2^*_{s_\sharp}}(\R^N)}=1$, with~$s_\sharp$ as in~\eqref{scritico}.
Roughly speaking, one can consider~${\mathcal{S}}$ as the analogue of the Sobolev constant
for the operator~$A_\mu$ in~\eqref{AMMU}. \medskip

For our purpose this generalized Sobolev constant is useful to identify the pairs~$(a,b)$ allowing for a nontrivial solution of~\eqref{mainab}. Specifically, these pairs are precisely the ones lying in~$Q_l$
below the lower curve of the Dancer-\Fucik spectrum and satisfying
\begin{equation}\label{sotto}
\min \set{a,b} > \lambda_l - \frac{\mathcal S}{|\Omega|^{(2 s_\sharp)/N}}.
\end{equation}
Here above and in the rest of this paper, $|\Omega|$ stands for the Lebesgue measure of~$\Omega$.
The corresponding region of interest is sketched in Figure~\ref{FI12}.

The result that we obtain is thus as follows:

\begin{theorem}\label{main1}
Let~$\mu=\mu^+-\mu^-$ with~$\mu^+$ and~$\mu^-$ satisfying~\eqref{mu00}, \eqref{mu3} and~\eqref{mu2}.

Let~$(a,b) \in Q_l$. Assume that~$b < \nu_{l-1}(a)$ and that~\eqref{sotto} is satisfied.

Then, there exists~$\gamma_0>0$, depending only on~$N$, $\Omega$, $s_\sharp$, $a$ and~$b$, such that if~$\gamma\in[0,\gamma_0]$
then problem~\eqref{mainab} admits a nontrivial solution.
\end{theorem}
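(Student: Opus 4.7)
The approach is variational. First, I would work in the Hilbert space $X_0^\mu$ of functions vanishing outside $\Omega$, endowed with the norm whose square is the bracketed expression in \eqref{best_rho}; by \eqref{mu00} this is a genuine norm, and by definition of $\mathcal S$ one has the continuous embedding $X_0^\mu \hookrightarrow L^{2^\ast_{s_\sharp}}(\R^N)$. The associated energy functional
\begin{equation*}
J(u) = \frac{1}{2}\int_{[0,1]}\!\Big[c_{N,s}\iint_{\R^{2N}}\frac{|u(x)-u(y)|^2}{|x-y|^{N+2s}}\,dx\,dy\Big]\,d\mu(s) - \frac{b}{2}\|u^+\|_{L^2}^2 - \frac{a}{2}\|u^-\|_{L^2}^2 - \frac{1}{2^\ast_{s_\sharp}}\|u\|_{L^{2^\ast_{s_\sharp}}}^{2^\ast_{s_\sharp}}
\end{equation*}
is of class $C^1$ on $X_0^\mu$, and its critical points solve \eqref{mainab}. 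Splitting the quadratic part as a $\mu^+$--integral minus a $\mu^-$--integral and using \eqref{mu2}--\eqref{mu3} together with an interpolation of each $s$--Gagliardo seminorm (for $s \in [0,\overline s]$) against the $\mu^+$--dominated norm, one bounds the $\mu^-$--piece by $\eta(\gamma)\|u\|_{X_0^\mu}^2$ with $\eta(\gamma)\to 0$. Hence, for $\gamma$ small, the quadratic part of $J$ is an equivalent Hilbert norm and the spectrum of $A_\mu$ is an $O(\eta(\gamma))$ perturbation of that of $A_{\mu^+}$.

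Next, I would install a linking geometry for $J$. Writing $H^-$ for the span of the eigenfunctions of $A_\mu$ with eigenvalue $\le \lambda_{l-1}$ and $H^+$ for its orthogonal complement, the position $(a,b) \in Q_l$, $b < \nu_{l-1}(a)$ triggers the standard Dancer--\Fucik machinery (see \cite[Chapter 4]{MR3012848}): there exist $\rho, R > 0$, a unit vector $e \in H^+$ in the $\lambda_l$--eigenspace, and $\alpha > 0$ such that $J \ge \alpha$ on $\partial B_\rho \cap H^+$ while $J \le 0$ on $\partial M$ for
\[
M := \big\{v+te : v \in H^-,\ t \ge 0,\ \|v+te\|_{X_0^\mu} \le R\big\}.
\]
The key input for the upper bound is that $b < \nu_{l-1}(a)$ makes the quadratic form $\|u\|_{X_0^\mu}^2 - b\|u^+\|_{L^2}^2 - a\|u^-\|_{L^2}^2$ nonpositive on $H^- \oplus \R e$ (this is exactly what ``below the lower \Fucik curve'' encodes), while the critical power drives $J \to -\infty$ as $t \to \infty$. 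The $O(\eta(\gamma))$ spectral perturbation is too weak to destroy these estimates.

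It then remains to place the linking min-max value $c := \inf_{\varphi\in\Gamma}\sup_{u\in M} J(\varphi(u))$ strictly below the compactness threshold $c^\ast := \frac{s_\sharp}{N}\mathcal S^{N/(2s_\sharp)}$, and to verify Palais--Smale below $c^\ast$. For the former, I would test along a path built from a truncated Talenti-type extremizer $U_\varepsilon$ of $\mathcal S$ concentrated at an interior point of $\Omega$; a Brezis--Nirenberg-type expansion (integrated against $\mu^+$ for every $s\in[0,1]$, with the leading critical contribution coming from \eqref{scritico}) yields
\[
c \le c^\ast - \Big(\min\{a,b\} - \lambda_l + \tfrac{\mathcal S}{|\Omega|^{2s_\sharp/N}}\Big)\|U_\varepsilon\|_{L^2(\Omega)}^2 + o\big(\|U_\varepsilon\|_{L^2(\Omega)}^2\big),
\]
whose bracket is strictly positive by \eqref{sotto}. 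Palais--Smale at level $c < c^\ast$ is then standard: boundedness follows from the identity $J(u_n) - \tfrac{1}{2^\ast_{s_\sharp}}\langle J'(u_n),u_n\rangle$, and a concentration-compactness analysis adapted to $A_\mu$ (whose critical bubbling is driven only by the $s_\sharp$--slice of $\mu^+$, each bubble carrying mass at least $c^\ast$) rules out loss of compactness. The linking theorem then produces a nontrivial critical point of $J$, yielding the desired solution of \eqref{mainab}. I expect the main obstacle to be the uniform-in-$s$ expansion of the Gagliardo seminorms of $U_\varepsilon$ integrated against $\mu^+$, so that the matching with $\mathcal S$ and with the gap prescribed by \eqref{sotto} can be executed simultaneously for every $s$ in the support of the measure rather than for a single fractional Laplacian.
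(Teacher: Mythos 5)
Your overall framework (reabsorbing $\mu^-$ for small $\gamma$, linking below a compactness threshold, verifying Palais--Smale via $J - \tfrac{1}{2^*_{s_\sharp}}\langle J',\cdot\rangle$) matches the paper's. However, the crucial step --- placing the minimax level strictly below the threshold --- is where you diverge, and where I believe your argument has a genuine gap.

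The paper does \emph{not} use a Talenti-bubble/Br\'ezis--Nirenberg expansion at all. Instead, it bounds $E(u)$ for $u$ in the eigenspaces $E_1,\dots,E_l$ by a direct chain of elementary inequalities: using $\int_{[0,1]}[u]_s^2\,d\mu \le \lambda_l\|u\|_{L^2}^2$ on eigenspaces together with H\"older in the form $\|u\|_{L^2}^{2^*_{s_\sharp}} \le |\Omega|^{2s_\sharp/(N-2s_\sharp)}\|u\|_{L^{2^*_{s_\sharp}}}^{2^*_{s_\sharp}}$, one gets $E(u) \le h(\|u\|_{L^2})$ with $h(t)=\tfrac12(\lambda_l-\min\{a,b\})t^2 - \tfrac{1}{2^*_{s_\sharp}}|\Omega|^{-2s_\sharp/(N-2s_\sharp)}t^{2^*_{s_\sharp}}$. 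Maximising $h$ and invoking~\eqref{sotto} gives $E(u) < c^\ast$ on all relevant eigenspaces, after which the abstract linking theorem of~\cite[Theorem~4.1]{MR4535437} (built exactly for nondifferentiable jumping nonlinearities) yields a critical point at a level below $c^\ast$. This is why the paper needs no asymptotic expansions, no dimensional restriction beyond $N\ge 3$, and why the specific form of~\eqref{sotto} appears: it is precisely the $L^2$--$L^{2^*_{s_\sharp}}$ H\"older comparison on $\Omega$, not a bubble rate.

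Your proposed inequality $c \le c^\ast - \big(\min\{a,b\}-\lambda_l+\tfrac{\mathcal S}{|\Omega|^{2s_\sharp/N}}\big)\|U_\varepsilon\|_{L^2}^2 + o(\|U_\varepsilon\|_{L^2}^2)$ does not come from any standard Br\'ezis--Nirenberg expansion. A genuine expansion of $\sup_t J(tU_\varepsilon)$ produces a leading correction of the form $-C\,\lambda\,\varepsilon^{2s_\sharp}$ (when the linear coefficient $\lambda$ is positive and $N>4s_\sharp$), with a sign and a dimensional constraint, and the additional mixing with the negative eigenspace $H^-$ and the jumping term introduces cross-terms that are delicate to control; nothing in this procedure naturally outputs the additive constant $\mathcal S/|\Omega|^{2s_\sharp/N}$. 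As written, the formula is reverse-engineered to match~\eqref{sotto} rather than derived, and if one attempted to carry out the expansion honestly one would re-introduce the very dimension hypothesis ($N\ge 4$ for the local case, cf.~\cite{MR4535437}) that Theorem~\ref{main1} is designed to remove. You also quote $c^\ast = \tfrac{s_\sharp}{N}\mathcal S^{N/(2s_\sharp)}$, whereas the paper must work below the strictly smaller threshold $c^\ast = \tfrac{s_\sharp}{N}\big((1-\theta_0)\mathcal S\big)^{N/(2s_\sharp)}$: the gap $\theta_0$ absorbs the $c_0\gamma$-loss produced by the negative part of $\mu$ in the Palais--Smale analysis, and without this margin the contradiction argument showing nontriviality of the weak limit does not close. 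In short: your compactness and reabsorption steps are on track, but the level estimate needs to be replaced by the elementary H\"older-on-eigenspaces bound, and the threshold must be shaved by $\theta_0$.
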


\begin{figure}[h]
\begin{center}
\includegraphics[scale=.25]{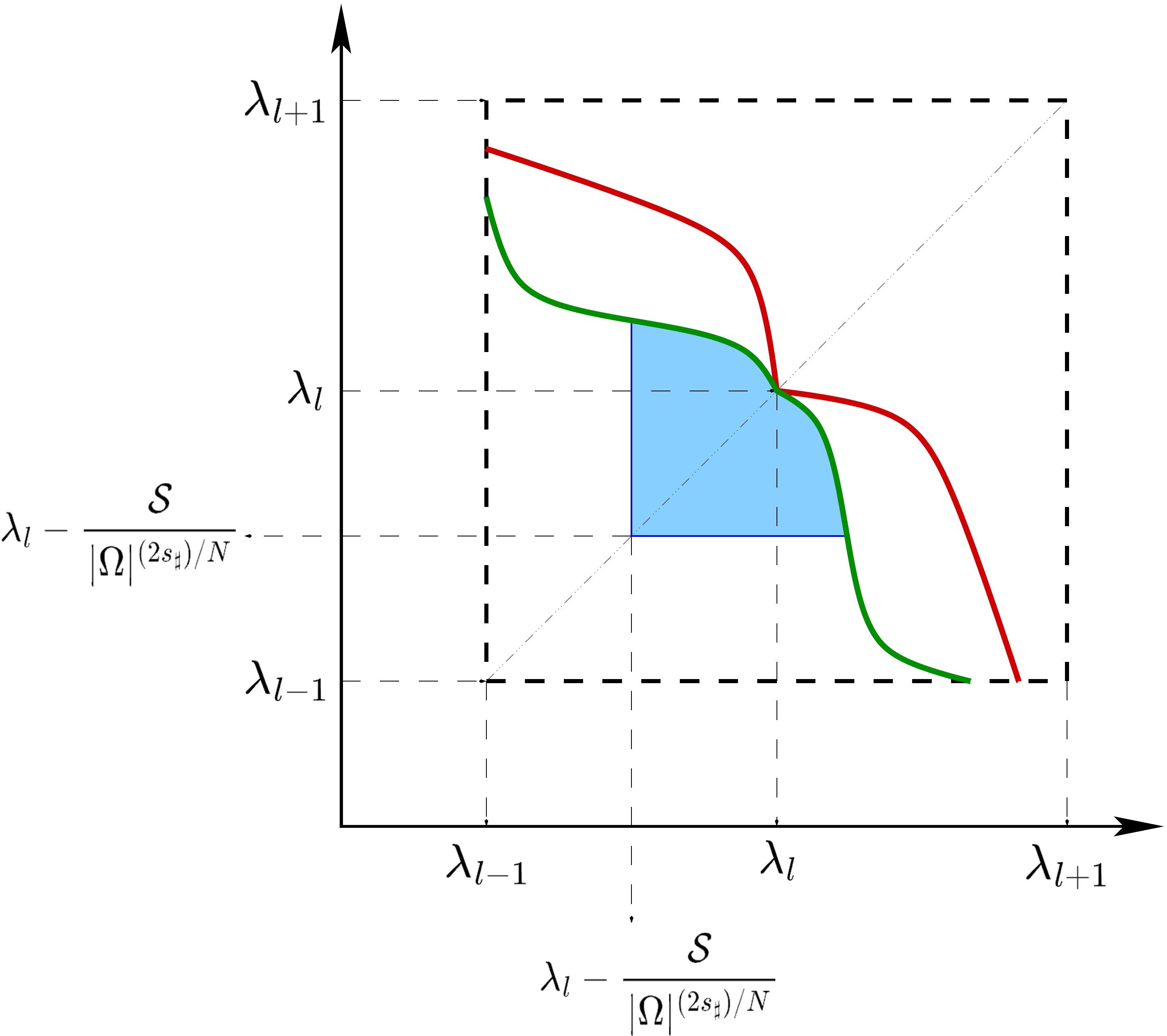}
\end{center}
\caption{The region (in light blue) below the lower curve of the Dancer-\Fucik spectrum where the existence of a nontrivial solution is guaranteed by Theorem~\ref{main1}.}\label{FI12}
\end{figure}

We stress that Theorem~\ref{main1} is not only new in its wide generality, but it
also possesses many specific cases which are also new.

In particular:
\begin{itemize}
\item If~$\mu:=\delta_1$, i.e. if~$A_\mu$ reduces to the classical Laplacian,
then problem~\eqref{mainab} has been recently studied in~\cite{MR4535437}.
In particular, \cite[Theorem 1.3]{MR4535437} provided the existence of a nontrivial solution when~$N\ge 4$.

Our result gives an existence result in a small region below the lower curve and holds for~$N\ge 3$
(hence improving the known condition on the dimension).

A detailed discussion of this type of results will be given in Corollary~\ref{ILS}.\medskip

\item If~$\mu:=\delta_s$, i.e. if the operator is the fractional Laplacian~$(-\Delta)^s$ for some~$s\in(0,1)$, our results are still new, to the best of our knowledge
(in fact, they seem to be new even in the case~$a=b$ in which no jumping nonlinearity is present,
but see~\cite{MR3060890} for related results).

We treat this case in detail in Corollary~\ref{ILFRA}.\medskip

\item If~$\mu:=\delta_1+\delta_s$, i.e. if~$A_\mu=-\Delta+(-\Delta)^s$ for some~$s\in(0,1)$,
then Theorem~\ref{main1}  is new.

In this setting, the particular case~$a=b=\lambda$ has been recently studied in~\cite[Theorem 1.4]{arXiv.2209.07502}, where a nontrivial solution was found for a suitable range of~$\lambda$.

The application of Theorem~\ref{main1} for this choice of~$\mu$ will be discussed in Corollary~\ref{mix_corollary}.\medskip

\item The case of the superposition of two nonlocal operators with different orders,
corresponding to the choice of the measure~$\mu:=\delta_{s_1}+\delta_{s_2}$ 
and to an operator of the type~$(-\Delta)^{s_1}+(-\Delta)^{s_2}$
for some~$s_1$, $s_2\in(0,1)$, is also new to our best knowledge.

\medskip

\item The case in which the measure~$\mu$ changes sign is also new to our best knowledge.
This seems to be new even in the case~$\mu:= \delta_1 -\alpha\delta_s$,
corresponding to an operator of the form~$-\Delta-\alpha(-\Delta)^s$,
where~$s\in(0,1)$ and~$\alpha$ is a small positive constant (notice the ``wrong'' sign in the second term of this operator).

A simple example in this setting is provided in Corollary~\ref{mix_signed}.
Actually, we think that our strategy on how to deal with ``wrong'' sign contributions
may be of general interest and lead to the study of a rather general class of operators with competing diffusive trends.
\medskip

\item The case of a convergent series
$$ \sum_{k=0}^{+\infty} c_k (-\Delta)^{s_k} u,\qquad{\mbox{where }}\,
\sum_{k=0}^{+\infty} c_k\in(0,+\infty),$$
with 
\begin{itemize}
\item[(i)] either~$c_k\ge0$ for all~$k\in\N$,
\item[(ii)] or
\begin{eqnarray*}& &
c_k>0\ \text{ for all } k\in\{1,\dots, \overline k\} \text{ and } \sum_{k=\overline k +1}^{+\infty} c_k \le \gamma \sum_{k=0}^{\overline k} c_k,\\
&&{\mbox{for some~$\overline k\in\N$ and~$\gamma\ge0$,}}\end{eqnarray*}
\end{itemize}
are also new (see Corollaries ~\ref{serie1} and ~\ref{serie2}).\medskip

\item The continuous superposition of fractional operators of the form
$$ \int_0^1 (-\Delta)^s u \,f(s)\,ds,$$
where~$f$ is a measurable
and non identically zero function,
is also new (see Corollary~\ref{function}).
\end{itemize}

In the forthcoming paper~\cite{CATERINA-PP}, we will also consider the case of nonlinear fractional operators of mixed order of $p$--Laplacian type.
\medskip

The rest of this paper is organized as follows. Section~\ref{op_section} gathers several estimates
of Sobolev type which will constitute the functional analytic core of our study.
Then, we present in Section~\ref{KAMqw} the variational framework in which we work and we complete the proof of the main result in Section~\ref{sec_main1}.

In Section~\ref{EXA:A} we apply the general result
in Theorem~\ref{main1} to several specific cases of interest, which are also new in the literature.

An interesting technical aspect of the proofs presented is that
our arguments are (for the first time in the literature, to our best knowledge\footnote{For instance, the measure on fractional exponents studied in~\cite{MR3485125} was supposed to be positive and supported away from~$s=0$, while we do not need either of these assumptions here.}) capable also of dealing with operators ``with the wrong sign'', i.e. the ones coming from the measure~$\mu^-$
and which have to be ``reabsorbed'' through quantitative estimates into~$\mu^+$.
We think that it is particularly remarkable that no extra assumption on the equation is needed for this. Given its interest also in practical situations (in which competing operators could participate to a complex model with opposite\footnote{For example, an interesting model of mixed operator with opposite sign could be that of a biological population in which different individuals have different dispersal behaviors. Indeed, on the one hand, in view of the L\'evy flight foraging hypothesis,
mixed diffusive operators are an interesting tool to describe searching patters of predators; on the other hand, it would be tempting to have a simplified, but effective, description of social behaviors via a reversed fractional heat equation (while diffusion tends to spread mass around, inverse diffusion favors concentration and it may therefore be a helpful tool to include animals' tendency to cluster into groups of conspecifics). In this sense, mixed operators with different signs can turn out to be handy in mathematical biology, population dynamics, social sciences, etc.} diffusion and concentration features)
we believe that this novelty can open a new direction of research and apply to other problems as well.

\section{Sobolev-type estimates}\label{op_section}
In this section, we consider a bounded open set~$\Omega\subset\R^N$
and we develop suitable energy estimates to deal with the operator in~\eqref{AMMU}.

For this, for~$s\in(0,1)$, we let
\begin{equation}\label{semisp}
[u]_{s}: = \left({c_{N,s}}\iint_{\R^{2N}} \frac{|u(x) - u(y)|^2}{|x - y|^{N+2s}}\, dx\, dy\right)^{\frac12}
\end{equation}
be the Gagliardo seminorm of a measurable function~$u : \R^N \to \R$, see e.g. \cite{MR2944369}.

The consistent choice of the normalizing constant~$c_{N,s}$ is such that
$$ \lim_{s\nearrow1}[u]_{s}=[u]_1:=\|\nabla u\|_{L^2(\R^N)}
\qquad{\mbox{and}}\qquad
\lim_{s\searrow0}[u]_s=[u]_0:=\|u\|_{L^2(\R^N)}.
$$
\medskip

We now observe that higher exponents in fractional norms control lower exponents, with uniform constants, according to the next observation:
 
\begin{lemma}\label{nons}
Let~$0\le s_1 \le s_2 \le1$. 

Then, for any measurable function~$u:\R^N\to\R$ with~$u=0$ a.e. in~$\R^N\setminus\Omega$ we have that
\begin{equation}\label{spp}
[u]_{s_1}\le c \, [u]_{s_2},
\end{equation}
for a suitable positive constant~$c=c(N,\Omega)$.
\end{lemma}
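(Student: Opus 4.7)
The plan is to combine two Fourier-based ingredients: a uniform Poincar\'e-type inequality and a pointwise comparison of Fourier symbols. The starting point is the identity
$$[u]_s^2 \,=\, \int_{\R^N} (2\pi|\xi|)^{2s}\,|\hat u(\xi)|^2\, d\xi, \qquad s \in [0,1],$$
which follows from Plancherel together with the fact that $(-\Delta)^s$ has Fourier symbol $(2\pi|\xi|)^{2s}$, and is consistent with the endpoint formulas $[u]_0 = \|u\|_{L^2(\R^N)}$ and $[u]_1 = \|\nabla u\|_{L^2(\R^N)}$ built into the choice of $c_{N,s}$.

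First, I would establish a uniform Poincar\'e-type estimate: there exists a constant $\kappa = \kappa(N,\Omega) > 0$ such that $\|u\|_{L^2(\R^N)}^2 \leq \kappa\,[u]_s^2$ for every $s \in [0,1]$ and every $u$ vanishing outside $\Omega$. Since $u$ is supported in $\Omega$, Cauchy--Schwarz gives $\|\hat u\|_{L^\infty} \leq |\Omega|^{1/2} \|u\|_{L^2}$, and hence $\int_{|\xi| \leq r} |\hat u|^2\, d\xi \leq |B_r|\,|\Omega|\,\|u\|_{L^2}^2$ for every $r > 0$. Choosing $r_0 = r_0(N, |\Omega|) > 0$ so that this upper bound equals $\|u\|_{L^2}^2 / 2$, Plancherel forces $\int_{|\xi| > r_0} |\hat u|^2\, d\xi \geq \|u\|_{L^2}^2 / 2$. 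Since $(2\pi|\xi|)^{2s} \geq \min\{1, (2\pi r_0)^2\}$ on $\{|\xi| > r_0\}$ uniformly in $s \in [0,1]$, I obtain $[u]_s^2 \geq \tfrac{1}{2}\min\{1, (2\pi r_0)^2\}\,\|u\|_{L^2}^2$, which is the desired Poincar\'e bound.

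Second, I would exploit the elementary pointwise bound $(2\pi|\xi|)^{2 s_1} \leq (2\pi|\xi|)^{2 s_2} + 1$, valid for $s_1 \leq s_2$ by separate inspection of the cases $2\pi|\xi| \leq 1$ (in which the left-hand side is at most $1$) and $2\pi|\xi| > 1$ (in which it is dominated by the first term on the right). Integration against $|\hat u|^2$ yields $[u]_{s_1}^2 \leq [u]_{s_2}^2 + \|u\|_{L^2}^2$. Combining this with the Poincar\'e step produces $[u]_{s_1}^2 \leq (1 + \kappa)\,[u]_{s_2}^2$, which is precisely \eqref{spp} with $c = \sqrt{1 + \kappa}$.

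The main obstacle will be enforcing uniformity in $s \in [0,1]$ in the Poincar\'e step. A direct Gagliardo-based estimate produces a constant with $c_{N,s}$ in the denominator, and since $c_{N,s}$ vanishes as $s \searrow 0$ and $s \nearrow 1$, such bounds are not uniform. The Fourier route sidesteps this, because the $s$-dependent normalization has already been absorbed into the symbol identity, while the $L^\infty$ bound on $\hat u$ afforded by the compact support of $u$ is itself $s$-independent.
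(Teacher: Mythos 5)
Your proof is correct and reaches the same intermediate inequality as the paper, namely $[u]_{s_1}^2 \le \|u\|_{L^2}^2 + [u]_{s_2}^2$ (the paper splits the Fourier integral at $|\xi| = 1/(2\pi)$; your pointwise bound $(2\pi|\xi|)^{2s_1} \le (2\pi|\xi|)^{2s_2}+1$ is the same observation), but the route you take to absorb the $\|u\|_{L^2}^2$ term into $[u]_{s_2}^2$ with an $s$-independent constant is genuinely different from the paper's. The paper invokes the sharp fractional Sobolev inequality with the explicit constant $C^\star(N,s)$ from Chen--Frank--Weth, proves a uniform lower bound $C^\star(N,s) \ge C^\star(N)$ via elementary Gamma-function estimates, and then chains H\"older and Sobolev to get $\|u\|_{L^2}^2 \lesssim [u]_{s_2}^2$. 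You instead exploit compact support directly on the Fourier side: Cauchy--Schwarz gives $\|\widehat u\|_\infty \le |\Omega|^{1/2}\|u\|_{L^2}$, so the low-frequency mass $\int_{|\xi|\le r_0}|\widehat u|^2$ can be made at most half of $\|u\|_{L^2}^2$ for an $r_0$ depending only on $N,|\Omega|$, forcing half the $L^2$ mass into high frequencies where the symbol $(2\pi|\xi|)^{2s}$ is uniformly bounded below. This is more elementary (no sharp Sobolev constants, no Gamma functions) and has the added advantage that it does not require a separate density step: the Fourier identity for $[u]_s$ and the $L^\infty$ bound on $\widehat u$ both hold directly for any $u\in L^2$ supported in $\Omega$, and if $[u]_{s_2}<\infty$ then $u\in L^2$ follows automatically (for $s_2>0$ by the nonlocal tail, for $s_2=0$ trivially), so the inequality is proved at once in the generality stated. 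The paper, by contrast, first works with $u\in C^\infty_0(\Omega)$ and then passes to the limit via Fatou. Your closing remark about the failure of a direct Gagliardo estimate (because $c_{N,s}\to 0$ at the endpoints) correctly identifies the obstruction that forces the Fourier route in both arguments.
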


\begin{proof} First, we suppose that~$u\in C^\infty_0(\Omega)$.
A rather delicate issue is that we can take the constant~$c(N,\Omega)$ independent
of~$s_1$ and~$s_2$. To check this, it is convenient to write the Gagliardo seminorm in terms of the Fourier transform (see e.g.~\cite{MR2944369}) as
$$ [u]_s=\left(\;\int_{\R^{N}} |2\pi \xi|^{2s}\,|\widehat u(\xi)|^2\,d\xi\right)^{\frac12}.$$
We stress that, by Plancherel Theorem, this is also valid when~$s=0$ and~$s=1$.

One can combine this with the fractional Sobolev constant, which can be explicitly computed
in the Hilbert setting, see~\cite[formula~(2)]{MR3179693}, according to which
$$ [u]_s^2\ge C^\star(N,s)\,\|u\|_{L^{2^*_s}(\R^N)}^2,$$
with
\[ C^\star(N,s):= 2^{4s} \pi^{3s} \, \frac{\Gamma((N+2s)/2)}{\Gamma((N-2s)/2)} \, \left(\frac{\Gamma(N/2)}{\Gamma(N)}\right)^{2s/N},
\]
where we used the standard notation for the Gamma Function.

We recall that the Gamma Function on the real line has a minimum at~$r_\star:=1.46...$, with~$\Gamma(r_\star)>0.88$,  rising to either side of this minimum. Thus,
\[ C^\star(N,s)\ge \frac{0.88}{\Gamma(N+10)} \, \left(\frac{0.88}{\Gamma(N)}\right)^{2s/N}\ge\frac{0.88}{\Gamma(N+10)} \, \left(\frac{0.88}{\Gamma(N)}\right)^{2/N}=:C^\star(N).
\]
Consequently, using the Sobolev and H\"older inequalities,
\begin{eqnarray*}
[u]_{s_1}^2&\le& \int_{B_{1/(2\pi)}} |2\pi \xi|^{2s_1}\,|\widehat u(\xi)|^2\,d\xi+\int_{\R^N\setminus B_{1/(2\pi)}} |2\pi \xi|^{2s_2}\,|\widehat u(\xi)|^2\,d\xi\\
&\le&  \int_{\R^N} |\widehat u(\xi)|^2\,d\xi+\int_{\R^N} |2\pi \xi|^{2s_2}\,|\widehat u(\xi)|^2\,d\xi
\\&=& \|u\|^2_{L^2(\Omega)}+[u]^2_{s_2}
\\&\le& |\Omega|^{\frac{2s_2}{N}}\|u\|^2_{L^{2^*_{s_2}}(\Omega)}+[u]^2_{s_2}
\\&\le&\frac{(1+|\Omega|)^{\frac{2s_2}{N}}}{C^\star(N,s_2)}[u]^2_{s_2}+[u]^2_{s_2}
\\&\le&\frac{(1+|\Omega|)^{\frac{2}{N}}}{C^\star(N)}[u]^2_{s_2}+[u]^2_{s_2}
\end{eqnarray*}
and this proves~\eqref{spp} when~$u\in C^\infty_0(\Omega)$.

Now we perform a density argument to establish~\eqref{spp} in the general case.
To this end, let~$u:\R^N\to\R$ be a measurable function with~$u=0$ a.e. in~$\R^N\setminus\Omega$. We can assume that~$[u]_{s_2}<+\infty$, otherwise there is nothing to prove. Then, by the density of the smooth functions in the (possibly fractional) Sobolev spaces,
we find a sequence of functions~$u_k\in C^\infty_0(\Omega)$ such that~$[u_k-u]_{s_2}\to0$ as~$k\to+\infty$.

Thus, by (possibly fractional) Sobolev embeddings, up to a subsequence we can assume that~$u_k\to u$ in~$L^2(\Omega)$ and a.e. in~$\R^N$.
Accordingly, we can use the already proved version of~\eqref{spp} to infer that~$[u_k]_{s_1}\le c \, [u_k]_{s_2}$ and, as a consequence,
\begin{eqnarray*}
\liminf_{k\to+\infty}[u_k]_{s_1}\le c \liminf_{k\to+\infty} [u_k]_{s_2}\le c\left([u]_{s_2}+
\liminf_{k\to+\infty}[u_k-u]_{s_2}\right)=c\,[u]_{s_2}.
\end{eqnarray*}
Hence, the desired result in~\eqref{spp} follows 
from Fatou's Lemma.
\end{proof}

We define the space~$\mathcal{X}(\Omega)$ as the
set of measurable functions~$u:\R^N\to\R$ such that~$u=0$
in~$\R^N\setminus \Omega$ and
\begin{equation*}
\int_{[{{ 0 }}, 1]} [u]^2_{s}\, d\mu^+ (s) <+\infty.
\end{equation*}

\begin{lemma}\label{hilbert}
Suppose that~$\mu^+$ satisfies~\eqref{mu00}.
Then, $\mathcal{X}(\Omega)$ is a Hilbert space.
\end{lemma}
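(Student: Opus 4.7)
The plan is to equip $\mathcal{X}(\Omega)$ with the inner product obtained by polarizing the natural squared norm $\|u\|_{\mathcal{X}}^2 := \int_{[0,1]} [u]_s^2\, d\mu^+(s)$, namely
$$\langle u, v \rangle_{\mathcal{X}} := \int_{[0,1]} c_{N,s} \iint_{\R^{2N}} \frac{(u(x)-u(y))(v(x)-v(y))}{|x-y|^{N+2s}}\, dx\, dy\, d\mu^+(s),$$
with the customary conventions at the endpoints $s=0$ and $s=1$. Bilinearity and symmetry are immediate, and non-negativity is clear because $\mu^+\ge 0$; applying Cauchy--Schwarz first to the double integral in $x,y$ and then to the $\mu^+$-integration shows that $\langle u,v\rangle_{\mathcal{X}}$ converges absolutely whenever $u,v\in\mathcal{X}(\Omega)$. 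The two non-trivial points to verify are definiteness and completeness.

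The crucial intermediate estimate, which settles both, comes from Lemma~\ref{nons} applied with $s_1=\overline{s}$: for every $s \in [\overline{s}, 1]$ one has $[u]_{\overline{s}}^2 \leq c^2\, [u]_s^2$ with $c=c(N,\Omega)$ independent of $s$. Integrating against $\mu^+$ on $[\overline{s},1]$ and dividing by $\mu^+([\overline{s},1])$, which is strictly positive by~\eqref{mu00}, yields
$$[u]_{\overline{s}}^2 \leq \frac{c^2}{\mu^+([\overline{s}, 1])}\, \|u\|_{\mathcal{X}}^2.$$
This furnishes a continuous embedding of $\mathcal{X}(\Omega)$ into the Sobolev space $H^{\overline{s}}_0(\Omega)$ (classical if $\overline{s}=1$, fractional otherwise), which is itself a Hilbert space. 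Via the standard (fractional) Poincar\'e inequality on the bounded set $\Omega$, definiteness of $\langle\cdot,\cdot\rangle_{\mathcal{X}}$ then follows at once: $\|u\|_{\mathcal{X}}=0$ forces $[u]_{\overline{s}}=0$, and hence $u\equiv 0$.

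For completeness, any Cauchy sequence $(u_n)\subset\mathcal{X}(\Omega)$ is, through the above embedding, Cauchy in $H^{\overline{s}}_0(\Omega)$; therefore it converges there to some $u$, with $u=0$ outside $\Omega$, and, up to a subsequence, $u_n\to u$ almost everywhere in $\R^N$. To upgrade this to convergence in $\mathcal{X}(\Omega)$, I would apply Fatou's Lemma twice: first to the Gagliardo double integral for each fixed $s$, obtaining $[u-u_m]_s^2 \leq \liminf_{n\to\infty} [u_n-u_m]_s^2$; then to the $\mu^+$-integration, obtaining
$$\|u-u_m\|_{\mathcal{X}}^2 \leq \liminf_{n\to\infty}\|u_n-u_m\|_{\mathcal{X}}^2,$$
whose right-hand side tends to $0$ as $m\to\infty$ by the Cauchy hypothesis. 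This simultaneously shows $u\in\mathcal{X}(\Omega)$ and $u_n\to u$ in $\mathcal{X}(\Omega)$.

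The step I expect to be most delicate is precisely the extraction of an a.e.~convergent subsequence from a sequence only known to be Cauchy in the abstract averaged norm $\|\cdot\|_{\mathcal{X}}$: one cannot apply Fatou directly inside $\mathcal{X}$ without first securing pointwise convergence. Assumption~\eqref{mu00} combined with Lemma~\ref{nons} is exactly what reduces the question to a compactness statement in the single Sobolev space $H^{\overline{s}}_0(\Omega)$, where the argument is standard; this Chebyshev-style reduction to one exponent is the essential use of the hypothesis.
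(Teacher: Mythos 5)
Your overall plan---reduce to a single Sobolev space $H^{\overline s}_0(\Omega)$ via Lemma~\ref{nons} combined with~\eqref{mu00}, obtain a limit there, and then upgrade to convergence in $\mathcal X(\Omega)$ by Fatou---is the same strategy the paper uses. However, the paper treats the two cases $\mu^+(\{1\})\ne 0$ and $\mu^+(\{1\})=0$ separately, and that case split is not a stylistic choice: it is precisely what plugs a gap that your unified argument leaves open.

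The gap is at $s=1$ in the first Fatou application. You claim that, once a subsequence of $u_n$ converges to $u$ almost everywhere, one has $[u-u_m]_s^2\le\liminf_n[u_n-u_m]_s^2$ for \emph{each fixed} $s\in[0,1]$. This is correct for $s\in[0,1)$, where $[\,\cdot\,]_s$ is $\|\cdot\|_{L^2}$ (at $s=0$) or a Gagliardo double integral whose integrand converges a.e.\ in $(x,y)$. But at $s=1$ one has $[v]_1=\|\nabla v\|_{L^2(\R^N)}$, and almost-everywhere convergence of $u_n$ does \emph{not} give any pointwise control on $\nabla u_n$, so Fatou cannot be applied as stated. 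If $\mu^+(\{1\})>0$ and $\overline s<1$, this endpoint genuinely contributes to $\|u-u_m\|_{\mathcal X}^2$, and your chain of inequalities breaks.

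The fix is cheap and is, in effect, what the paper does. If $\mu^+(\{1\})>0$, then $\mu^+(\{1\})\,[u_m-u_k]_1^2\le\|u_m-u_k\|^2_{\mathcal X}$ shows the sequence is Cauchy in $H^1_0(\Omega)$; hence $u_n\to u$ in $H^1_0(\Omega)$, and Lemma~\ref{nons} applied with $s_2=1$ gives $[u-u_n]_s\le c\,[u-u_n]_1$ for every $s$, so $\|u-u_n\|^2_{\mathcal X}\le c^2\,\mu^+([0,1])\,[u-u_n]_1^2\to 0$ with no Fatou at all. If instead $\mu^+(\{1\})=0$, then by~\eqref{mu00} and~\eqref{mu3} one has $\mu^+([\overline s,1))>0$, your $H^{\overline s}_0$-embedding and Fatou argument go through since $s=1$ carries no mass, and only the $s\in[0,1)$ part of $[\,\cdot\,]_s$ (i.e.\ genuine double integrals and the $L^2$ term at $s=0$) is involved. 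You should either insert this dichotomy, or, if you prefer a single stroke, replace ``a.e.\ convergence and Fatou'' at $s=1$ by boundedness of $u_n$ in $H^1_0$ (which follows from Cauchyness in $\mathcal X$ when $\mu^+(\{1\})>0$), weak $H^1$-convergence to $u$, and weak lower semicontinuity of the norm. Everything else in your write-up (polarization, Cauchy--Schwarz, the definiteness argument via fractional Poincar\'e, the second Fatou application in the $s$-variable) is correct.
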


\begin{proof}
Let~$u_n$ be a Cauchy sequence and take~$\epsilon>0$.
Thus, there exists~$\overline{n}\in\N$ such that if~$m$, $k\ge \overline{n}$ then
\begin{equation}\label{jdiewohuggbrdbv}
\int_{[{{ 0 }}, 1]} [u_m-u_k]^2_{s}\, d\mu^+ (s)\le\epsilon.\end{equation}
Now, we distinguish two cases, either~$\mu^+(1)\neq0$
or~$\mu^+(1)=0$.

If~$\mu^+(1)\neq0$, then we obtain from~\eqref{jdiewohuggbrdbv} that
$$\epsilon\ge \mu^+(1)\int_{\Omega}|\nabla u_m-\nabla u_k|^2\,dx,$$
and therefore~$u_n$ is a Cauchy sequence in~$H^1_0(\Omega)$.
Accordingly, there exists~$u\in H^1_0(\Omega)$ such that~$u_n\to u$
in~$H^1_0(\Omega)$ as~$n\to+\infty$. 

Also, exploiting Lemma~\ref{nons}, we see that, for all~$s\in[0,1)$,
$$ [u-u_n]_{s}\le c(N,\Omega) \, [u-u_n]_{1}.$$
As a consequence,
$$ \int_{[0,1]}[u-u_n]_s^2\,d\mu^+(s)
\le c^2(N,\Omega) \mu^+([0,1]) [u-u_n]_{1}^2,$$
which gives that
$$ \lim_{n\to+\infty} \int_{[0,1]}[u-u_n]_s^2\,d\mu^+(s)=0,$$
as desired.

If instead~$\mu^+(1)=0$, then we deduce from~\eqref{jdiewohuggbrdbv}
and Lemma~\ref{nons} that
\begin{eqnarray*}&&\epsilon\ge
\int_{[{{ 0 }}, 1)} [u_m-u_k]^2_{s}\, d\mu^+ (s)\ge 
\int_{[\overline{s}, 1)} [u_m-u_k]^2_{s}\, d\mu^+ (s)\\&&\qquad
\ge \frac{1}{c^2(N,\Omega)}
\int_{[\overline{s}, 1)} [u_m-u_k]^2_{\overline{s}}\, d\mu^+ (s)
=\frac{\mu^+([\overline{s}, 1))}{c^2(N,\Omega)}\,
[u_m-u_k]^2_{\overline{s}}
.\end{eqnarray*}
We point out that~$\mu^+([\overline{s}, 1))>0$ in light of~\eqref{mu00}
and the fact that~$\mu^+(1)=0$.

Accordingly, $u_n$ is a Cauchy sequence in~$H^{\overline{s}}_0(\Omega)$
and therefore it converges to some~$u$ in~$H^{\overline{s}}_0(\Omega)$.
Hence, we can extract a subsequence~$u_{n_j}$ converging to~$u$
in~$L^2(\Omega)$ and a.e. in~$\R^N$.
Then, if~$m\ge\overline{n}$, we have that
\begin{eqnarray*}
\epsilon&\ge& \lim_{j\to+\infty}
\int_{[{{ 0 }}, 1)} [u_m-u_{n_j}]^2_{s}\, d\mu^+ (s)
\\&\ge& \lim_{j\to+\infty}
\left(\mu^+(0)\|u_m- u_{n_j}\|^2_{L^2(\Omega)}+
\int_{({{ 0 }}, 1)} [u_m-u_{n_j}]^2_{s}\, d\mu^+ (s)\right)\\
&=&\mu^+(0)\|u_m- u\|^2_{L^2(\Omega)}\\&&\qquad
+\lim_{j\to+\infty}
\int_{({{ 0 }}, 1)} \left({c_{N,s}}\iint_{\R^{2N}} \frac{|(u_m-u_{n_j})(x) - (u_m-u_{n_j})(y)|^2}{|x - y|^{N+2s}}\, dx\, dy\right) d\mu^+ (s).
\end{eqnarray*}
As a result, by Fatou's Lemma,
\begin{eqnarray*}
\epsilon&\ge&
\mu^+(0)\|u_m- u\|^2_{L^2(\Omega)}\\&&\qquad
+
\int_{({{ 0 }}, 1)} \left({c_{N,s}}\iint_{\R^{2N}}
\liminf_{j\to+\infty} \frac{|(u_m-u_{n_j})(x) - (u_m-u_{n_j})(y)|^2}{|x - y|^{N+2s}}\, dx\, dy\right) d\mu^+ (s)\\&=&
\mu^+(0)\|u_m- u\|^2_{L^2(\Omega)}\\&&\qquad
+
\int_{({{ 0 }}, 1)} \left({c_{N,s}}\iint_{\R^{2N}}
\frac{|(u_m-u)(x) - (u_m-u)(y)|^2}{|x - y|^{N+2s}}\, dx\, dy\right) d\mu^+ (s)\\
&=&
\int_{[{{ 0 }}, 1]} [u_m-u]^2_{s}\, d\mu^+ (s),
\end{eqnarray*}
which says that the sequence~$u_n$ converges to~$u$
in~${\mathcal{X}}(\Omega)$, as desired.
\end{proof}

In this setting, we can ``reabsorb'' the negative part of the signed measure~$\mu$, according to the following result:

\begin{proposition}\label{crucial} Assume~\eqref{mu3} and~\eqref{mu2}.

Then, there exists~$c_0=c_0(N,\Omega)>0$ such that,
for any~$u\in\mathcal{X}(\Omega)$, we have
\[
\int_{[{{ 0 }}, \overline s]} [u]_{s}^2 \, d\mu^- (s) \le c_0\,\gamma \int_{[\overline s, 1]} [u]^2_{s} \, d\mu(s).
\]
\end{proposition}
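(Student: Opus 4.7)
The key tool will be Lemma~\ref{nons}, which provides a uniform (in the exponents) control of lower-order Gagliardo seminorms by higher-order ones. My strategy is to estimate $[u]_s^2$, for $s \in [0,\overline s]$, by a $\mu^+$-average of $[u]_{s'}^2$ over $s' \in [\overline s, 1]$, and then convert the ratio of measures into the factor $\gamma$ through hypothesis~\eqref{mu2}, while using~\eqref{mu3} to replace $\mu^+$ by $\mu$ on $[\overline s,1]$.

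First I would dispose of the degenerate case. If $\mu^+\bigl([\overline s, 1]\bigr)=0$, then~\eqref{mu2} forces $\mu^-\bigl([0,\overline s]\bigr)=0$, so both sides of the inequality vanish and there is nothing to prove. Hence I may suppose $\mu^+\bigl([\overline s,1]\bigr)>0$.

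Next, fix $s\in[0,\overline s]$ and $s'\in[\overline s,1]$, so that $s\le s'$. Lemma~\ref{nons} yields a constant $c=c(N,\Omega)$, independent of $s$ and $s'$, such that $[u]_s\le c\,[u]_{s'}$, whence $[u]_s^2\le c^2\,[u]_{s'}^2$. Integrating this inequality in $s'$ against $d\mu^+$ on $[\overline s,1]$ and dividing by $\mu^+([\overline s,1])$ gives
$$
[u]_s^2\;\le\;\frac{c^2}{\mu^+\bigl([\overline s,1]\bigr)}\int_{[\overline s,1]}[u]_{s'}^2\,d\mu^+(s').
$$

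Finally, I would integrate this in $s$ against $d\mu^-$ on $[0,\overline s]$: the left-hand side becomes the target quantity, and the right-hand side picks up a factor $\mu^-([0,\overline s])/\mu^+([\overline s,1])$, which is bounded by $\gamma$ thanks to~\eqref{mu2}. Since~\eqref{mu3} ensures that $\mu$ and $\mu^+$ coincide on $[\overline s,1]$, the resulting inequality is precisely the claim with $c_0:=c(N,\Omega)^2$. There is no real obstacle here: all the work was done in Lemma~\ref{nons}, whose crucial feature is the uniformity of the constant with respect to the fractional exponents, and the rest is Fubini-type bookkeeping together with the hypotheses on $\mu^\pm$.
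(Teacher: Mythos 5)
Your proof is correct and follows essentially the same route as the paper: dispose of the degenerate case via~\eqref{mu2}, invoke Lemma~\ref{nons} with its $s$-uniform constant, and convert the measure ratio into $\gamma$ using~\eqref{mu2} and~\eqref{mu3}. The only (cosmetic) difference is that you apply Lemma~\ref{nons} once directly to each pair $s\le s'$, whereas the paper chains through the intermediate exponent $\overline s$ and applies the lemma twice; this gives you the marginally sharper constant $c_0=c^2(N,\Omega)$ in place of the paper's $c_0=c^4(N,\Omega)$, but the idea is the same.
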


\begin{proof}
We notice that if~$\mu^+([\overline s,1])=0$, then condition~\eqref{mu2} would give that~$\mu^-([0,\overline s])=0$,
and therefore Proposition~\ref{crucial} would be trivially satisfied.

Thus, from now on we suppose that~$\mu^+([\overline s,1])>0$.
By applying Lemma~\ref{nons} with~$s_1: = \overline s$ and~$s_2: = s$ we infer that, for all~$s\in[\overline s,1]$,
\begin{eqnarray*}
[u]^2_{\overline s}\le c^2(N,\Omega) \, [u]^2_{s}.
\end{eqnarray*}
Similarly,  applying Lemma~\ref{nons} with~$s_1: = s$ and~$s_2: = \overline s$, for all~$s\in[0,\overline s]$ we have that
\begin{eqnarray*}
[u]^2_{s}\le c^2(N,\Omega) \, [u]^2_{\overline s}.
\end{eqnarray*}

Consequently, recalling~\eqref{mu3} and~\eqref{mu2},
\begin{eqnarray*}&&
\int_{[{{ 0 }}, \overline s]} [u]_{s}^2 \, d\mu^- (s)\le c^2(N,\Omega) \int_{[{{ 0 }}, \overline s]} [u]^2_{\overline s}\, d\mu^- (s)=c^2(N,\Omega) \,[u]^2_{\overline s}\,\mu^-\big({[{{ 0 }}, \overline s]}\big)\\&&\qquad\le
c^2(N,\Omega) \,\gamma\,[u]^2_{\overline s}\,
\mu^+\big([\overline s, 1]\big)=
c^2(N,\Omega) \,\gamma \int_{[\overline s, 1]}[u]^2_{\overline s}\,d\mu^+(s)
 \\&&\qquad\le
c^4(N,\Omega) \,\gamma \int_{[\overline s, 1]}[u]^2_{s}\,d\mu^+(s)
.
\end{eqnarray*}
This is the desired result, with~$c_0:=c^4(N,\Omega)$.
\end{proof}

\begin{proposition}\label{emb}Assume~\eqref{mu00}, \eqref{mu3} and~\eqref{mu2}.
Let~$s_\sharp\in [\overline s, 1]$ be as in~\eqref{scritico}. 

Then, there exists a positive constant~$\bar c =\bar c (N,\Omega, s_\sharp)$ such that, for any~$u\in \mathcal{X}(\Omega)$,
\begin{equation}\label{pqwodfl134rt}
[u]_{s_\sharp}\le \bar{c}\left(\, \int_{[{{ 0 }}, 1]} [u]^2_{s}\, d\mu^+ (s)\right)^{\frac12}.
\end{equation}
In particular, the space~$\mathcal{X}(\Omega)$ is continuously embedded in~$L^r(\Omega)$ for any~$r\in [1, 2^*_{s_\sharp}]$ and
compactly embedded in~$L^r(\Omega)$ for any~$r\in [1, 2^*_{s_\sharp})$.
\end{proposition}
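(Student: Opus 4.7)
\textbf{Proof proposal for Proposition~\ref{emb}.}

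The plan is to establish~\eqref{pqwodfl134rt} as a direct consequence of Lemma~\ref{nons} together with the positivity of~$\mu^+$ on~$[s_\sharp,1]$ granted by~\eqref{scritico}, and then to deduce the embedding statements by combining~\eqref{pqwodfl134rt} with the classical fractional Sobolev and Rellich--Kondrachov theorems for~$H^{s_\sharp}_0(\Omega)$.

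For the first part, I would observe that since~$s_\sharp \in [\overline s,1]$, for every~$s\in[s_\sharp,1]$ Lemma~\ref{nons} (applied with~$s_1=s_\sharp$ and~$s_2=s$) yields
\[
[u]_{s_\sharp}^2 \le c^2(N,\Omega)\,[u]_s^2.
\]
Integrating this against~$d\mu^+(s)$ over~$[s_\sharp,1]$ (where the measure is strictly positive thanks to~\eqref{scritico}) and then extending the domain of integration to~$[0,1]$, one obtains
\[
[u]_{s_\sharp}^2 \,\mu^+([s_\sharp,1]) \le c^2(N,\Omega)\int_{[s_\sharp,1]}[u]_s^2\,d\mu^+(s) \le c^2(N,\Omega)\int_{[0,1]}[u]_s^2\,d\mu^+(s),
\]
which is~\eqref{pqwodfl134rt} with~$\bar c := c(N,\Omega)/\sqrt{\mu^+([s_\sharp,1])}$.

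For the embedding part, applying Lemma~\ref{nons} once more with~$s_1=0$ and~$s_2=s_\sharp$ gives~$\|u\|_{L^2(\R^N)} \le c(N,\Omega)\,[u]_{s_\sharp}$, so~\eqref{pqwodfl134rt} implies that~$\mathcal{X}(\Omega)$ embeds continuously into~$H^{s_\sharp}_0(\Omega)$. The continuous embedding into~$L^{2^*_{s_\sharp}}(\Omega)$ then follows from the classical fractional Sobolev inequality (or from the argument already used in Lemma~\ref{nons} with the explicit constant~$C^\star(N,s_\sharp)$), and H\"older's inequality combined with the boundedness of~$\Omega$ extends this to every~$r\in[1,2^*_{s_\sharp}]$. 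Finally, the compact embedding for~$r\in[1,2^*_{s_\sharp})$ is inherited from the compactness of the embedding~$H^{s_\sharp}_0(\Omega)\hookrightarrow L^r(\Omega)$ (a fractional Rellich--Kondrachov theorem valid on bounded domains).

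The only real technicality I foresee is checking that the constant~$\bar c$ can legitimately be written as a function of~$(N,\Omega,s_\sharp)$ alone once the measure~$\mu^+$ is fixed, since the denominator~$\mu^+([s_\sharp,1])$ implicitly carries a dependence on~$\mu^+$; however, this is part of the data of the problem and no further effort is required beyond pointing this out. The rest is essentially a bookkeeping exercise built on top of Lemma~\ref{nons} and standard embedding results.
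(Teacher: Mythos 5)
Your proof of the key inequality~\eqref{pqwodfl134rt} is exactly the paper's argument: apply Lemma~\ref{nons} with $s_1=s_\sharp$, $s_2=s$ for $s\in[s_\sharp,1]$, integrate against $d\mu^+$ over $[s_\sharp,1]$, enlarge to $[0,1]$, and divide by $\mu^+([s_\sharp,1])>0$ using~\eqref{scritico}. The paper then dispatches the embedding conclusions in a single line, whereas you spell out the chain $\mathcal{X}(\Omega)\hookrightarrow H^{s_\sharp}_0(\Omega)\hookrightarrow L^r(\Omega)$ via fractional Sobolev/Rellich--Kondrachov; that elaboration is correct and consistent with what the paper implicitly relies on. Your side remark that $\bar c$ secretly carries a dependence on $\mu^+$ through $\mu^+([s_\sharp,1])$ is a fair observation about the paper's notation, and you are right that it is harmless since $\mu^+$ is fixed data.
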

\begin{proof} By Lemma~\ref{nons}, used here with~$s_1: = s_\sharp$ and~$s_2 := s$,
for all~$s\in[s_\sharp,1]$ we have that~$[u]_{s_\sharp}\le c(N,\Omega) \, [u]_{s}$.

As a result,
$$ \mu^+ \big([s_\sharp, 1]\big)\,[u]_{s_\sharp}^2\le c^2(N,\Omega) \, \int_{[s_\sharp, 1]}[u]_{s}^2\,d\mu^+(s)\le
c^2(N,\Omega)\int_{[{{ 0 }}, 1]} [u]^2_{s}\, d\mu^+ (s).
$$
This and~\eqref{scritico} yield the desired result.\end{proof}

\section{Variational setting}\label{KAMqw}
In this section, we cast problem~\eqref{mainab} into a suitable variational setting.
To start, we state the following definition.
\begin{definition}\label{wsol2}
A weak solution of problem~\eqref{mainab} is a function~$u \in \mathcal{X}(\Omega)$ such that, for all~$ v \in \mathcal{X}(\Omega)$,
\[
\begin{split}
&\int_{[{{ 0 }}, 1]}\left(c_{N,s}\iint_{\R^{2N}} \frac{(u(x) - u(y))\, (v(x) - v(y))}{|x - y|^{N+2s}}\, dx\, dy\right) d\mu^+(s)\\
&\qquad- \int_{[{{ 0 }}, \overline s]}\left(c_{N,s}\iint_{\R^{2N}} \frac{(u(x) - u(y))\, (v(x) - v(y))}{|x - y|^{N+2s}}\, dx\, dy\right) d\mu^-(s)\\
&\qquad\qquad= \int_{\Omega} (bu^+ - a u^-) v\, dx + \int_\Omega |u|^{2_{s_\sharp}^\ast - 2}\, uv\, dx.
\end{split}
\]
\end{definition}

The variational functional~$E: \mathcal{X}(\Omega)\to \R$ associated with problem~\eqref{mainab} is defined by
\begin{equation}\label{fun}
E(u) = \half \,\int_{[{{ 0 }}, 1]} [u]^2_{s}\, d\mu^+ (s)
 -\half\int_{[{{ 0 }}, \overline s]} [u]^2_s\, d\mu^-(s) - \half \int_\Omega \left[a\, (u^-)^2 + b\, (u^+)^2\right]\, dx - \frac{1}{2_{s_\sharp}^*} \int_\Omega |u|^{2_{s_\sharp}^*}\, dx.
\end{equation}
\begin{remark}
Note that in the functional~\eqref{fun} the term arising from the negative part of the measure~$\mu$ can be absorbed in the norm. In fact, by Proposition~\ref{crucial} we have that
\[
\int_{[{{ 0 }}, \overline s]} [u]_s^2 \, d\mu^- (s) \le c_0(N,\Omega) \,\gamma\int_{[\overline s, 1]} [u]^2_s \, d\mu(s) \le c_0(N,\Omega) \,\gamma\int_{[{{ 0 }}, 1]} [u]^2_{s}\, d\mu^+ (s).
\]
In particular, if~$\gamma$ is sufficiently small (possibly depending on~$N$ and~$\Omega$) it follows that
\[
E(u) \ge \frac{1}{4}\int_{[{{ 0 }}, 1]} [u]^2_{s}\, d\mu^+ (s)
- \half \int_\Omega \left[a\, (u^-)^2 + b\, (u^+)^2\right] \,dx - \frac{1}{2_{s_\sharp}^*} \int_\Omega |u|^{2_{s_\sharp}^*}\, dx.
\]
\end{remark}

We now state a weak convergence result (to be used below in the analysis of
Palais-Smale sequences in the forthcoming Proposition~\ref{P51}):

\begin{lemma}\label{Vitali}
Let~$u_n$ be a bounded sequence in~$\mathcal X(\Omega)$. 

Then, there exists~$u:\R^N\to\R$ such that, up to a subsequence, for any~$v\in\mathcal X(\Omega)$,
\begin{equation}\label{Vconv}
\begin{split}
&\lim_{n\to+\infty}\int_{[{{ 0 }}, \overline s]} \left(\;\iint_{\R^{2N}} \frac{c_{N,s}(u_n(x)-u_n(y)) (v(x)-v(y))}{|x-y|^{N+2s}} \, dx\, dy\right)\, d\mu^-(s)  \\
&\qquad =\int_{[{{ 0 }}, \overline s]} \left(\;\iint_{\R^{2N}} \frac{c_{N,s}(u(x)-u(y)) (v(x)-v(y))}{|x-y|^{N+2s}} \, dx\, dy\right)\, d\mu^-(s).
\end{split}
\end{equation}
Also, 
\begin{equation}\label{lunoconv}
{\mbox{$u_n$ converges to~$u$ in~$L^1(\Omega)$ as~$n\to+\infty$.}}
\end{equation}
\end{lemma}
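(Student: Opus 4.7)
The plan rests on two observations: $\mathcal{X}(\Omega)$ is a Hilbert space, and the bilinear form underlying~\eqref{Vconv} extends to a continuous bilinear form on $\mathcal{X}(\Omega)\times\mathcal{X}(\Omega)$ thanks to Proposition~\ref{crucial}. Since $\mathcal{X}(\Omega)$ is a Hilbert space by Lemma~\ref{hilbert}, every bounded sequence admits a weakly convergent subsequence; passing to such a subsequence (still denoted by $u_n$), I obtain some $u\in\mathcal{X}(\Omega)$ with $u_n\rightharpoonup u$ weakly in $\mathcal{X}(\Omega)$. The $L^1$-convergence~\eqref{lunoconv} then follows (after a further extraction if needed) from the compact embedding $\mathcal{X}(\Omega)\hookrightarrow L^r(\Omega)$ for $r\in[1,2^*_{s_\sharp})$ provided by Proposition~\ref{emb}, applied with $r=1$.

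To establish~\eqref{Vconv}, I fix $v\in\mathcal{X}(\Omega)$ and consider the linear functional
\[
\Phi_v(w) := \int_{[{{0}},\overline s]}\left(c_{N,s}\iint_{\R^{2N}} \frac{(w(x)-w(y))(v(x)-v(y))}{|x-y|^{N+2s}}\,dx\,dy\right) d\mu^-(s),\qquad w\in\mathcal{X}(\Omega).
\]
Cauchy-Schwarz applied inside the double integral yields $[w]_s\,[v]_s$ as a pointwise-in-$s$ bound for the integrand, and a further Cauchy-Schwarz against $d\mu^-(s)$ gives
\[
|\Phi_v(w)| \le \left(\int_{[{{0}},\overline s]}[w]_s^2\,d\mu^-(s)\right)^{\!1/2}\left(\int_{[{{0}},\overline s]}[v]_s^2\,d\mu^-(s)\right)^{\!1/2}.
\]
Applying Proposition~\ref{crucial} to both factors (and noting that $\mu=\mu^+$ on $[\overline s,1]$ by~\eqref{mu3}), this is in turn majorized by
\[
c_0\,\gamma\left(\int_{[{{0}},1]}[w]_s^2\,d\mu^+(s)\right)^{\!1/2}\left(\int_{[{{0}},1]}[v]_s^2\,d\mu^+(s)\right)^{\!1/2},
\]
so $\Phi_v$ is a bounded linear functional on the Hilbert space $\mathcal{X}(\Omega)$. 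The weak convergence $u_n\rightharpoonup u$ then immediately implies $\Phi_v(u_n)\to\Phi_v(u)$, which is precisely~\eqref{Vconv}.

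The only substantive step is the continuity estimate for $\Phi_v$: without Proposition~\ref{crucial}, weak convergence in $\mathcal{X}(\Omega)$ (whose inner product only involves $\mu^+$) would not automatically control the $\mu^-$-weighted quantities appearing in~\eqref{Vconv}, and the argument would break down. Once the reabsorption of the negative part into the positive one is in place, $\Phi_v$ is a continuous linear functional on the ambient Hilbert space, and~\eqref{Vconv} is a direct consequence of the definition of weak convergence, while~\eqref{lunoconv} is a routine consequence of the compact embedding. Both conclusions can be arranged on the same subsequence by a standard diagonal extraction.
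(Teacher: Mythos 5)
Your proof is correct and takes a cleaner, more systematic route than the paper's. You work at the level of the ambient Hilbert space $\mathcal{X}(\Omega)$: extract a weakly convergent subsequence $u_n\rightharpoonup u$ (Lemma~\ref{hilbert}), obtain~\eqref{lunoconv} from the compact embedding $\mathcal{X}(\Omega)\hookrightarrow L^1(\Omega)$ of Proposition~\ref{emb}, and obtain~\eqref{Vconv} by observing that $\Phi_v$ is a bounded linear functional on $\mathcal{X}(\Omega)$ thanks to the double Cauchy--Schwarz estimate combined with Proposition~\ref{crucial}. The paper instead argues more directly on the $\mu^-$-weighted quantities: it first bounds $\int_{[0,\overline s]}[u_n]_s^2\,d\mu^-(s)$ uniformly via Proposition~\ref{crucial} and invokes Banach--Alaoglu for~\eqref{Vconv}, and then establishes~\eqref{lunoconv} through a three-way case analysis on $\mu^-$ (zero measure, Dirac mass at $0$, and $\mu^-((0,\overline s])>0$), in the last case locating an $\epsilon_0$ with $\mu^-([\epsilon_0,\overline s])>0$ and passing through the compactness of $H^{\epsilon_0}_0(\Omega)\hookrightarrow L^2(\Omega)$. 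Your approach avoids the case analysis entirely, and it makes transparent that the function $u$ in both conclusions is one and the same (it is the single weak limit in $\mathcal{X}(\Omega)$, and the strong $L^1$ limit must coincide with it since the continuous embedding carries weak convergence in $\mathcal{X}(\Omega)$ to weak convergence in $L^1$) — an identification that the paper's proof leaves somewhat implicit. Both arguments share the essential ingredient, namely Proposition~\ref{crucial}, which is what lets the $\mu^-$-weighted seminorms be controlled by the $\mathcal{X}(\Omega)$-norm; you correctly single this out as the substantive step.
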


\begin{proof} By~\eqref{semisp} and Proposition~\ref{crucial},
\begin{eqnarray*}&&
\int_{[{{ 0 }}, \overline s]}\left(
{c_{N,s}} \iint_{\R^{2N}} \frac{|u_n(x) - u_n(y)|^2}{|x - y|^{N+2s}}\, dx\, dy\right)\,d\mu^-(s)
=\int_{[{{ 0 }}, \overline s]} [u_n]_{s}^2 \, d\mu^- (s)\\&&\qquad\qquad \le c_0\,\gamma \int_{[\overline s, 1]} [u_n]^2_{s} \, d\mu(s)\le c_0\,\gamma \int_{[{{ 0 }}, 1]} [u_n]^2_{s}\, d\mu^+ (s),
\end{eqnarray*}
which is bounded uniformly in~$n$.

The desired result in~\eqref{Vconv}
now follows from the Banach-Alaoglu Theorem.

It remains to establish~\eqref{lunoconv}. To this aim, we consider three cases:
$\mu^-$ is the zero measure, $\mu^-$ is a Dirac measure at~$0$
and, as a last possibility, $\mu^-((0,\overline{s}])>0$.

In the first two cases, we could have used directly Proposition~\ref{emb}, obtaining that, up to a subsequence, $u_n$ converges to some~$u$
in~$L^2(\R^N)$. This~$u$ would have satisfied 
both~\eqref{Vconv} and~\eqref{lunoconv} (because in the first case~\eqref{Vconv} is void and in the second case,
recalling footnote~\ref{footpahetehfh} on page~\pageref{footpahetehfh}, it has to be interpreted as
a weak convergence in~$L^2(\R^N)$). 

So, we can focus on the third case, namely we suppose that~$\mu^-((0,\overline{s}])>0$. Hence, by the Dominated Convergence Theorem, 
$$ \lim_{\epsilon\searrow0} \mu^-([\epsilon,\overline{s}])
=\mu^-((0,\overline{s}])>0.$$
Accordingly, there exists~$\epsilon_0\in(0,\overline{s}]$ such that~$\mu^-([\epsilon_0,\overline{s}])>0$.

{F}rom this and Lemma~\ref{nons},
we have that
$$\mu^-([\epsilon_0,\overline{s}])[u_n]^2_{\epsilon_0}\le c^2(N,\Omega) \, \int_{[\epsilon_0,\overline{s}]}[u_n]_{s}^2\,d\mu^-(s),
$$
which is bounded uniformly in~$n$, thanks to Proposition~\ref{crucial}.
Thus, by the compactness result for fractional Sobolev spaces (see e.g.~\cite{MR2944369}), we obtain~\eqref{lunoconv}, as desired. 
\end{proof}

Now we address the convergence of the Palais-Smale sequences. For this, we first point out that, in view of~\eqref{sotto},
there exists~$\epsilon_0=\epsilon_0(N,\Omega, s_\sharp, a,b)\in\left(0,\frac{{\mathcal{S}}}{|\Omega|^{(2 s_\sharp)/N}}\right)$ such that
\begin{equation*}
\min \set{a,b} > \lambda_l - \frac{\mathcal S}{|\Omega|^{(2 s_\sharp)/N}}+\epsilon_0.
\end{equation*}
Hence, we define
$$ \theta_0=\theta_0(N,\Omega, s_\sharp, a,b):=\frac{|\Omega|^{(2 s_\sharp)/N}}{\mathcal S}\,\epsilon_0\in(0,1)$$
and we see that
\begin{equation}\label{sottoBIS}
\min \set{a,b} > \lambda_l - \frac{\mathcal S}{|\Omega|^{(2 s_\sharp)/N}}(1-\theta_0).
\end{equation}

With this notation, we have:

\begin{proposition}\label{P51}
Let~$\mathcal S$ be as in~\eqref{best_rho} and
\begin{equation}\label{CASTA}
c^\ast:= \frac{s_\sharp}{N}\, \big((1-\theta_0)\mathcal S \big)^{\frac{N}{2 s_\sharp}}.
\end{equation}

Then, there exists~$\gamma_0>0$, depending on~$N$, $\Omega$,
$s_\sharp$, $a$ and~$b$, such that if~$\gamma\in[0,\gamma_0]$
and~$c \in (0,c^\ast)$,
then every~\PS{c} sequence of the functional~\eqref{fun} has a subsequence that converges weakly to a nontrivial critical point of~\eqref{fun}.
\end{proposition}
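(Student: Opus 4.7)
Consider a~\PS{c} sequence~$(u_n)\subset\mathcal{X}(\Omega)$, i.e. $E(u_n)\to c$ and~$E'(u_n)\to 0$ in the dual of~$\mathcal{X}(\Omega)$. The first task is to prove that~$(u_n)$ is bounded in~$\mathcal{X}(\Omega)$. Computing
\[
E(u_n)-\tfrac12\langle E'(u_n),u_n\rangle=\Big(\tfrac12-\tfrac{1}{2_{s_\sharp}^\ast}\Big)\int_{\Omega}|u_n|^{2_{s_\sharp}^\ast}\,dx=\tfrac{s_\sharp}{N}\int_{\Omega}|u_n|^{2_{s_\sharp}^\ast}\,dx,
\]
the Palais--Smale condition yields~$\int_\Omega|u_n|^{2_{s_\sharp}^\ast}\,dx\le C(1+\|u_n\|_{\mathcal{X}(\Omega)})$, and by H\"older's inequality also~$\|u_n\|_{L^2(\Omega)}^2$ is controlled by a subquadratic power of~$\|u_n\|_{\mathcal{X}(\Omega)}$. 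Testing~$\langle E'(u_n),u_n\rangle=o(\|u_n\|_{\mathcal{X}(\Omega)})$ and invoking Proposition~\ref{crucial} to reabsorb~$\int_{[0,\overline s]}[u_n]_s^2\,d\mu^-(s)\le c_0\gamma\,\|u_n\|_{\mathcal{X}(\Omega)}^2$ into the leading term then gives~$(1-c_0\gamma)\|u_n\|_{\mathcal{X}(\Omega)}^2\le C(1+\|u_n\|_{\mathcal{X}(\Omega)})$, whence boundedness provided~$\gamma$ is small enough that~$c_0\gamma<1$.

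Once~$(u_n)$ is bounded, Proposition~\ref{emb} extracts along a subsequence a limit~$u\in\mathcal{X}(\Omega)$ with~$u_n\rightharpoonup u$ weakly in~$\mathcal{X}(\Omega)$, strongly in~$L^r(\Omega)$ for every~$r\in[1,2_{s_\sharp}^\ast)$ and almost everywhere in~$\Omega$. I would then identify~$u$ as a critical point of~$E$ by passing to the limit in~$\langle E'(u_n),v\rangle=o(1)$ for~$v\in C_0^\infty(\Omega)$: the~$\mu^+$-bilinear form converges by weak convergence in the Hilbert space~$\mathcal{X}(\Omega)$ (Lemma~\ref{hilbert}); the~$\mu^-$-bilinear form converges by Lemma~\ref{Vitali}; the linear part by strong~$L^2$-convergence; and the critical power by the standard Vitali argument using the pointwise a.e.\ convergence and the uniform~$L^{2_{s_\sharp}^\ast}$-bound.

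The main obstacle is to show~$u\not\equiv 0$. I would argue by contradiction: setting~$w_n:=u_n-u$, the Brezis--Lieb lemma yields~$\int_\Omega|u_n|^{2_{s_\sharp}^\ast}dx=\int_\Omega|u|^{2_{s_\sharp}^\ast}dx+\int_\Omega|w_n|^{2_{s_\sharp}^\ast}dx+o(1)$, Hilbert-space orthogonality under weak convergence gives the analogous decomposition for~$\int_{[0,1]}[\cdot]_s^2\,d\mu^+(s)$, and Lemma~\ref{Vitali} (applied with test function~$u$) eliminates the cross-term in the expansion of~$\int_{[0,\overline s]}[\cdot]_s^2\,d\mu^-(s)$. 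Combining~$E(u_n)\to c$, $\langle E'(u_n),u_n\rangle\to 0$ and~$\langle E'(u),u\rangle=0$ produces
\[
\int_{[0,1]}[w_n]_s^2\,d\mu^+(s)-\int_{[0,\overline s]}[w_n]_s^2\,d\mu^-(s)=L+o(1),\qquad c=E(u)+\tfrac{s_\sharp}{N}\,L,
\]
with~$L:=\lim_n\int_\Omega|w_n|^{2_{s_\sharp}^\ast}\,dx\ge 0$.

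To close the argument, I would apply Proposition~\ref{crucial} to~$w_n$ and the defining inequality~$\int_{[0,1]}[w_n]_s^2\,d\mu^+(s)\ge\mathcal{S}\,\|w_n\|_{L^{2_{s_\sharp}^\ast}(\Omega)}^2$ coming from~\eqref{best_rho}, obtaining~$(1-c_0\gamma)\mathcal{S}\,L^{2/2_{s_\sharp}^\ast}\le L+o(1)$. This dichotomy forces either~$L=0$ or~$L\ge\bigl((1-c_0\gamma)\mathcal{S}\bigr)^{N/(2s_\sharp)}$. Assuming~$u\equiv 0$ gives~$E(u)=0$ and~$c=\tfrac{s_\sharp}{N}L$: the case~$L=0$ contradicts~$c>0$, while choosing~$\gamma_0$ so that~$c_0\gamma_0\le\theta_0$ (and small enough for the boundedness step) makes~$\tfrac{s_\sharp}{N}L\ge c^\ast$, contradicting~$c<c^\ast$. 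The most delicate point is precisely the Brezis--Lieb-type splitting for the~$\mu^-$-seminorm, where the signed-measure nature of~$\mu$ forbids a direct compactness argument and Lemma~\ref{Vitali} becomes indispensable.
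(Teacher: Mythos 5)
Your argument is correct and follows essentially the same route as the paper: boundedness of the PS sequence via the identity $E(u_n)-\tfrac12\langle E'(u_n),u_n\rangle=\tfrac{s_\sharp}{N}\int_\Omega|u_n|^{2^*_{s_\sharp}}$ combined with Proposition~\ref{crucial} to reabsorb the $\mu^-$-term; weak convergence and critical-point identification via Proposition~\ref{emb} and Lemma~\ref{Vitali}; nontriviality by contradiction using the Sobolev quotient $\mathcal{S}$ and the dichotomy that forces $c\ge c^\ast$ once $\gamma$ is small. The only substantive stylistic difference is that you introduce a full Brezis--Lieb splitting for $w_n:=u_n-u$ before specializing to $u\equiv 0$; since $u\equiv 0$ makes $w_n=u_n$, the entire decomposition collapses to the paper's direct argument on $u_n$, so your setup buys nothing extra here (though it would be the right framework if one wanted to upgrade weak to strong convergence). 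A minor plus on your side is that you explicitly track the $o(\|u_n\|_{\mathcal X(\Omega)})$ remainder in the boundedness step, where the paper is somewhat informal in writing $\langle dE(u_n),u_n\rangle\to 0$ before boundedness is established.
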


\begin{proof}
Let~$u_n$ be a~\PS{c} sequence of the functional~$E$, i.e.
\begin{equation}\label{c1}
\begin{split}&
\lim_{n\to+\infty}
E(u_n) \\
= \;&\lim_{n\to+\infty}\half  \int_{[{{ 0 }}, 1]} [u_n]^2_{s}\, d\mu^+ (s)
-\half\int_{[{{ 0 }}, \overline s]} [u_n]^2_s \, d\mu^-(s)
\\&\qquad - \half \int_\Omega \left[a\, (u_n^-)^2 + b\, (u_n^+)^2\right] \,dx - \frac{1}{2_{s_\sharp}^*} \int_\Omega |u_n|^{2_{s_\sharp}^*}\, dx\\= \;&c
\end{split}
\end{equation}
and~$ dE (u_n)$ converges to~$0$ in the dual of~$\mathcal{X}(\Omega)$,
namely
\begin{equation}\label{c2}
\lim_{n\to+\infty}\sup_{ v \in \mathcal{X}(\Omega)}\big|\langle dE (u_n), v\rangle\big| =0.
\end{equation}
Since, for all~$v \in \mathcal{X}(\Omega)$,
\begin{equation*}\begin{split}
\langle dE (u_n), v\rangle =
\;& \int_{[{{ 0 }}, 1]}\left(c_{N,s}
\iint_{\R^{2N}} \frac{(u_n(x) - u_n(y))\, (v(x) - v(y))}{|x - y|^{N+2s}}\, dx\, dy\right) d\mu^+(s)\\
&\qquad- \int_{[{{ 0 }}, \overline s]}\left(c_{N,s}
\iint_{\R^{2N}} \frac{(u_n(x) - u_n(y))\, (v(x) - v(y))}{|x - y|^{N+2s}}\, dx\, dy\right) d\mu^-(s)\\
&\qquad -\int_\Omega \big(b\, u_n^+ -a\, u_n^-\big)v \, dx - \int_\Omega |u_n|^{2_{s_\sharp}^\ast - 2}\, u_n v\, dx,
\end{split}
\end{equation*}
choosing~$v:=u_n$ in~\eqref{c2},
we obtain that
\begin{equation}\label{hufewghwutgui0987654}
\begin{split}0=\;& \lim_{n\to+\infty}
\langle dE (u_n), u_n\rangle\\=
\;& \lim_{n\to+\infty}\int_{[{{ 0 }}, 1]}\left(c_{N,s}
\iint_{\R^{2N}} \frac{|u_n(x) - u_n(y)|^2}{|x - y|^{N+2s}}\, dx\, dy\right) d\mu^+(s)\\
&\qquad- \int_{[{{ 0 }}, \overline s]}\left(c_{N,s}
\iint_{\R^{2N}} \frac{|u_n(x) - u_n(y)|^2}{|x - y|^{N+2s}}\, dx\, dy\right) d\mu^-(s)\\
&\qquad -\int_\Omega \big[b\, (u_n^+)^2 +a\, (u_n^-)^2\big]\, dx - \int_\Omega |u_n|^{2_{s_\sharp}^\ast}\, dx\\=\;&  \lim_{n\to+\infty}
\int_{[{{ 0 }}, 1]} [u_n]^2_{s}\, d\mu^+ (s) -\int_{[{{ 0 }}, \overline s]} [u_n]^2_s \, d\mu^-(s) - \int_\Omega \left[a\, (u_n^-)^2 + b\, (u_n^+)^2\right] \,dx - \int_\Omega |u_n|^{2_{s_\sharp}^*}\, dx
\\=\;&  \lim_{n\to+\infty} 2E(u_n)+\left(\frac2{2^*_{s_\sharp}}-1\right)\int_\Omega |u_n|^{2_{s_\sharp}^*}\, dx.
\end{split}
\end{equation}
Combining this and~\eqref{c1}, we infer that
\begin{equation}\label{jdewfguewogewuootr6u768i0}
0=2c+
\left(\frac2{2^*_{s_\sharp}}-1\right)\lim_{n\to+\infty}\int_\Omega |u_n|^{2_{s_\sharp}^*}\, dx,\end{equation}
yielding that, for large~$n$,
\begin{equation}\label{prima}
\left(\half -  \frac{1}{2_{s_\sharp}^*}\right) \|u_n\|^{2^*_{s_\sharp}}_{L^{2^*_{s_\sharp}}(\Omega)} \le c+1. 
\end{equation}
Moreover, from this and the H\"older inequality, it follows that
\begin{equation}\label{prima22}\begin{split}&
\int_\Omega \left[a\, (u_n^-)^2 + b\, (u_n^+)^2\right]\, dx \le \max\{ a, b\}\|u_n\|^2_{L^2(\Omega)} \le \max\{ a, b\} |\Omega|^{(2 s_\sharp)/N} \|u_n\|^2_{L^{2^*_{s_\sharp}}(\Omega)}\\
&\qquad\le \left(\frac{(c+1)N}{s_\sharp}\right)^{\frac2{2^*_{s_\sharp}}} \max\{ a, b\} |\Omega|^{(2 s_\sharp)/N}.
\end{split}\end{equation}

Now, by~\eqref{c1} and
Proposition~\ref{crucial}, we have that, as soon as~$n$ is big enough,
\begin{eqnarray*}
&&\frac12\int_{[{{ 0 }}, 1]} [u_n]^2_{s}\, d\mu^+ (s)\\
&\le&\half\int_{[{{ 0 }}, \overline s]} [u_n]^2_s \, d\mu^-(s) +\half \int_\Omega \left[a\, (u_n^-)^2 + b\, (u_n^+)^2\right] \,dx +\frac{1}{2_{s_\sharp}^*} \int_\Omega |u_n|^{2_{s_\sharp}^*}\, dx+c+1\\&\le&
\frac{c_0(N,\Omega)\gamma}2\int_{[ \overline s,1]} [u_n]^2_s \, d\mu(s) +\half \int_\Omega \left[a\, (u_n^-)^2 + b\, (u_n^+)^2\right] \,dx +\frac{1}{2_{s_\sharp}^*} \int_\Omega |u_n|^{2_{s_\sharp}^*}\, dx+c+1,
\end{eqnarray*}
and therefore, if~$\gamma$ is sufficiently small (possibly depending on~$N$ and~$\Omega$),
$$\frac14 \int_{[{{ 0 }}, 1]} [u_n]^2_{s}\, d\mu^+ (s)\le
\half \int_\Omega \left[a\, (u_n^-)^2 + b\, (u_n^+)^2\right] \,dx +\frac{1}{2_{s_\sharp}^*} \int_\Omega |u_n|^{2_{s_\sharp}^*}\, dx+c+1.
$$
{F}rom this, \eqref{prima} and~\eqref{prima22}, we obtain that
$$
\frac14 \int_{[{{ 0 }}, 1]} [u_n]^2_{s}\, d\mu^+ (s)\le
\half \left(\frac{(c+1)N}{s_\sharp}\right)^{\frac2{2^*_{s_\sharp}}} \max\{ a, b\} |\Omega|^{(2 s_\sharp)/N} +\frac{N(c+1)}{2s_\sharp},
$$
which says that~$ \int_{[{{ 0 }}, 1]} [u_n]^2_{s}\, d\mu^+ (s)$
is uniformly bounded in~$n$.

Hence, in view of Lemma~\ref{hilbert} and
Proposition~\ref{emb}, there exists~$u\in \mathcal{X}(\Omega)$ such that, up to subsequences,
\begin{equation}\label{weakun}\begin{split}
u_n\rightharpoonup u &\text{ in } \mathcal{X}(\Omega),\\ 
u_n\to u &\text{ in } L^{r}(\Omega) \text{ for every } r\in [1, 2_{s_\sharp}^*),\\ 
u_n\to u &\text{ a.e. in } \Omega.
\end{split}\end{equation}
Furthermore, we observe that~$u$ is a weak solution of~\eqref{mainab},
according to Definition~\ref{wsol2}, thanks to the convergence statements
in~\eqref{weakun} and Lemma~\ref{Vitali}.

It remains to prove that
\begin{equation}\label{uzerosper00}
u\not\equiv 0.\end{equation}
To this end,
suppose by contradiction that~$u\equiv 0$. 
We recall from~\eqref{hufewghwutgui0987654} that
\begin{eqnarray*} 0&=&
\lim_{n\to+\infty}\langle dE(u_n), u_n\rangle \\
&=&\lim_{n\to+\infty} \int_{[{{ 0 }}, 1]} [u_n]^2_{s}\, d\mu^+ (s) -\int_{[{{ 0 }}, \overline s]} [u_n]^2_s \, d\mu^-(s) - \int_\Omega \left[a\, (u_n^-)^2 + b\, (u_n^+)^2\right] \,dx - \int_\Omega |u_n|^{2_{s_\sharp}^\ast}\, dx \\&=&
\lim_{n\to+\infty} \int_{[{{ 0 }}, 1]} [u_n]^2_{s}\, d\mu^+ (s) -\int_{[{{ 0 }}, \overline s]} [u_n]^2_s \, d\mu^-(s) - \int_\Omega |u_n|^{2_{s_\sharp}^\ast}\, dx.
\end{eqnarray*}
Thus, exploiting Proposition~\ref{crucial}, we have that
\begin{eqnarray*} 0&\ge&
\lim_{n\to+\infty} \big(1-c_0(N,\Omega)\gamma\big)\int_{[{{ 0 }}, 1]} [u_n]^2_{s}\, d\mu^+ (s)  - \int_\Omega |u_n|^{2_{s_\sharp}^\ast}\, dx.
\end{eqnarray*}

Accordingly, recalling the definition of~${\mathcal{S}}$ in~\eqref{best_rho}, we infer that
\begin{eqnarray*}
0 &\ge& \lim_{n\to+\infty} 
\big(1-c_0(N,\Omega)\gamma\big)\int_{[{{ 0 }}, 1]} [u_n]^2_{s}\, d\mu^+ (s)
-\mathcal{S}^{-\frac{2_{s_\sharp}^*}2}
\left(\, \int_{[{{ 0 }}, 1]} [u_n]^2_{s}\, d\mu^+ (s)\right)^{\frac{2_{s_\sharp}^*}2}\\
&=&  \big(1-c_0(N,\Omega)\gamma\big)\\&&\qquad\times\lim_{n\to+\infty} 
\int_{[{{ 0 }}, 1]} [u_n]^2_{s}\, d\mu^+ (s) \left(1 - \frac{\mathcal{S}^{-\frac{2_{s_\sharp}^*}2}}{\big(1-c_0(N,\Omega)\gamma\big)}
\left(\, \int_{[{{ 0 }}, 1]} [u_n]^2_{s}\, d\mu^+ (s)\right)^{\frac{2_{s_\sharp}^*}2-1 }\right).
\end{eqnarray*}

Now, choosing~$\gamma$ sufficiently small (possibly in dependence of~$N$
and~$\Omega$) so that~$1-c_0(N,\Omega)\gamma>0$, we conclude that
\begin{equation}\label{jdiweogbfgsdkgjk00}
0\ge \lim_{n\to+\infty} \int_{[{{ 0 }}, 1]} [u_n]^2_{s}\, d\mu^+ (s)
\left(1 - \frac{\mathcal{S}^{-\frac{2_{s_\sharp}^*}2}}{\big(1-c_0(N,\Omega)\gamma\big)}
\left(\, \int_{[{{ 0 }}, 1]} [u_n]^2_{s}\, d\mu^+ (s)\right)^{\frac{2_{s_\sharp}^*}2-1 }\right).
\end{equation}

We observe that
\begin{equation}\label{jdiweogbfgsdkgjk}
\liminf_{n\to+\infty} \int_{[{{ 0 }}, 1]} [u_n]^2_{s}\, d\mu^+ (s)>0.
\end{equation}
Indeed, suppose by contradiction that
\begin{equation*}
\liminf_{n\to+\infty} \int_{[{{ 0 }}, 1]} [u_n]^2_{s}\, d\mu^+ (s)=0.
\end{equation*}
Then, by Proposition~\ref{crucial} we would also have that
\begin{equation*}
\liminf_{n\to+\infty} \int_{[{{ 0 }}, 1]} [u_n]^2_{s}\, d\mu^- (s)=0,
\end{equation*}
and therefore, by~\eqref{c1},
\begin{equation*}
0<c=\liminf_{n\to+\infty}\left(-\frac{1}{2_{s_\sharp}^*} \int_\Omega |u_n|^{2_{s_\sharp}^*}\, dx\right)\le 0,
\end{equation*}
which is a contradiction and thus establishes~\eqref{jdiweogbfgsdkgjk}.

Thanks to~\eqref{jdiweogbfgsdkgjk00} and~\eqref{jdiweogbfgsdkgjk},
we conclude that
\begin{equation*}
0\ge \limsup_{n\to+\infty} 
\left(1 - \frac{\mathcal{S}^{-\frac{2_{s_\sharp}^*}2}}{\big(1-c_0(N,\Omega)\gamma\big)}
\left(\, \int_{[{{ 0 }}, 1]} [u_n]^2_{s}\, d\mu^+ (s)\right)^{\frac{2_{s_\sharp}^*}2-1 }\right),
\end{equation*}
which in turn gives that
\begin{equation}\label{dhuwegi4utg4u3gtu43}
\liminf_{n\to+\infty}\int_{[{{ 0 }}, 1]} [u_n]^2_{s}\, d\mu^+ (s)\ge\big(1-c_0(N,\Omega)\gamma\big)^{\frac2{2_{s_\sharp}^*-2} }
\mathcal{S}^{\frac{2_{s_\sharp}^*}{2_{s_\sharp}^*-2}}
=\big(1-c_0(N,\Omega)\gamma\big)^{\frac{N-2s_\sharp}{2s_\sharp} }
\mathcal{S}^{\frac{N}{2_{s_\sharp}}}.
\end{equation}

Additionally, using again~\eqref{c1}, and recalling the strong convergence
statement in~\eqref{weakun},
\begin{equation*}c=
\lim_{n\to+\infty}\half  \int_{[{{ 0 }}, 1]} [u_n]^2_{s}\, d\mu^+ (s)
-\half\int_{[{{ 0 }}, \overline s]} [u_n]^2_s \, d\mu^-(s)- \frac{1}{2_{s_\sharp}^*} \int_\Omega |u_n|^{2_{s_\sharp}^*}\, dx.
\end{equation*}
Hence, exploiting Proposition~\ref{crucial}, this gives that
\begin{equation*}c\ge
\lim_{n\to+\infty}\frac{1-c_0(N,\Omega)\gamma}2
\int_{[{{ 0 }}, 1]} [u_n]^2_{s}\, d\mu^+ (s)
- \frac{1}{2_{s_\sharp}^*} \int_\Omega |u_n|^{2_{s_\sharp}^*}\, dx.
\end{equation*}
{F}rom this and~\eqref{jdewfguewogewuootr6u768i0} it follows that
\begin{equation*}c\ge
\lim_{n\to+\infty}\frac{1-c_0(N,\Omega)\gamma}2
\int_{[{{ 0 }}, 1]} [u_n]^2_{s}\, d\mu^+ (s)
- \frac{(N-2s_\sharp)c}{2{s_\sharp}},
\end{equation*}
and therefore
\begin{equation*}\frac{Nc}{2s_\sharp}\ge
\lim_{n\to+\infty}\frac{1-c_0(N,\Omega)\gamma}2
\int_{[{{ 0 }}, 1]} [u_n]^2_{s}\, d\mu^+ (s).
\end{equation*}
This and~\eqref{dhuwegi4utg4u3gtu43} give that
\begin{equation*}\frac{Nc}{s_\sharp}\ge
\big(1-c_0(N,\Omega)\gamma\big)^{\frac{N}{2s_\sharp}}
\mathcal{S}^{\frac{N}{2_{s_\sharp}}},
\end{equation*}
and thus, recalling the definition of~$c^\ast$ in~\eqref{CASTA},
\begin{equation*}c^\ast>c\ge
\big(1-c_0(N,\Omega)\gamma\big)^{\frac{N}{2s_\sharp}}\frac{s_\sharp}{N}
\mathcal{S}^{\frac{N}{2_{s_\sharp}}}
=\left(\frac{1-c_0(N,\Omega)\gamma}{1-\theta_0}\right)^{\frac{N}{2s_\sharp}} c^\ast.
\end{equation*}

We point out that this implies that
$$ c_0(N,\Omega)\gamma\ge \theta_0,$$
hence, choosing~$\gamma$ sufficiently small, possibly in dependence of~$N$, $\Omega$, $s_\sharp$, $a$ and~$b$,
we obtain the desired contradiction.

Then, the claim in~\eqref{uzerosper00} is established, completing the
proof of Proposition~\ref{P51}.
\end{proof}

\section{Existence theory and proof of Theorem~\ref{main1}}\label{sec_main1}

With the preliminary work carried out so far, we are now in position of proving the existence
result in Theorem~\ref{main1}.

\begin{proof}[Proof of Theorem~\ref{main1}]
The aim is to exploit~\cite[Theorem 4.1]{MR4535437}. To this end, we set~$E_l$ to be the eigenspace associated with the eigenvalue~$\lambda_l$
and we remark that, for all~$u\in E_j$ with~$j\in\{1,\dots,l\}$,
$$ \int_{[{{ 0 }}, 1]} [u]^2_{s}\, d\mu (s) =\lambda_j \|u\|^2_{L^2(\Omega)}\le \lambda_l \|u\|^2_{L^2(\Omega)}.$$

Now, we observe that, by the H\"older inequality,
\[
\|u\|^{2^*_{s_\sharp}}_{L^2(\Omega)}\le |\Omega|^{\frac{2 s_\sharp}{N-2 s_\sharp}} \|u\|_{L^{2_{s_\sharp}^*}(\Omega)}^{2^*_{s_\sharp}},
\]
and thus
\[ \int_\Omega |u|^{2_{s_\sharp}^*}\, dx =  \|u\|_{L^{2_{s_\sharp}^*}(\Omega)}^{2^*_{s_\sharp}}\ge  |\Omega|^{-\frac{2 s_\sharp}{N-2s_\sharp}} \|u\|^{2^*_{s_\sharp}}_{L^2(\Omega)}.
\]
Consequently, recalling the definition of the functional in~\eqref{fun}, we have that, for all~$u\in E_j$ with~$j\in\{1,\dots,l\}$,
\begin{equation}\label{diywtr843yt9876543}\begin{split}
&E(u)\\
=\;&
\half \,\int_{[{{ 0 }}, 1]} [u]^2_{s}\, d\mu^+ (s)
 -\half\int_{[{{ 0 }}, \overline s]} [u]^2_s\, d\mu^-(s) - \half \int_\Omega \left[a\, (u^-)^2 + b\, (u^+)^2\right]\, dx - \frac{1}{2_{s_\sharp}^*} \int_\Omega |u|^{2_{s_\sharp}^*}\, dx
\\
\le\;&\half \,\int_{[{{ 0 }}, 1]} [u]^2_{s}\, d\mu (s)
- \frac{ \min \set{a,b}}2\|u\|_{L^2(\Omega)}^2 - \frac{1}{2_{s_\sharp}^*} |\Omega|^{-\frac{2 s_\sharp}{N-2s_\sharp}} \|u\|^{2^*_{s_\sharp}}_{L^2(\Omega)}\\
\le\;&\frac{\lambda_l}2 \|u\|^2_{L^2(\Omega)}
- \frac{ \min \set{a,b}}2\|u\|_{L^2(\Omega)}^2 - \frac{1}{2_{s_\sharp}^*} |\Omega|^{-\frac{2 s_\sharp}{N-2s_\sharp}} \|u\|^{2^*_{s_\sharp}}_{L^2(\Omega)}.
\end{split}\end{equation}

Now we consider the function
$$ h(t):=\frac12\big( \lambda_l
- \min \set{a,b}\big)t^2 - \frac{1}{2_{s_\sharp}^*} |\Omega|^{-\frac{2 s_\sharp}{N-2s_\sharp}}t^{2^*_{s_\sharp}} $$
and we observe that~$\lambda_l- \min \set{a,b}>0$. Accordingly, we obtain that
$$ \max_{t\ge0} h(t) =\left(\frac12-\frac1{2_{s_\sharp}^*}\right) |\Omega| \big(\lambda_l
- \min \set{a,b}\big)^{\frac{N}{2s_\sharp}}.$$

Plugging this information into~\eqref{diywtr843yt9876543}, we conclude that
$$ E(u)\le\left(\frac12-\frac1{2_{s_\sharp}^*}\right) |\Omega| \big(\lambda_l
- \min \set{a,b}\big)^{\frac{N}{2s_\sharp}}.
$$
Therefore, exploiting the assumption in~\eqref{sottoBIS} and recalling the definition of~$c^\ast$ in~\eqref{CASTA}, we obtain that,
for all~$u\in E_j$ with~$j\in\{1,\dots,l\}$,
$$ E(u)< \left(\frac12-\frac1{2_{s_\sharp}^*}\right) |\Omega| \left(\frac{(1-\theta_0)\mathcal S}{|\Omega|^{(2 s_\sharp)/N}}\right)^{\frac{N}{2s_\sharp}}
=\left(\frac12-\frac1{2_{s_\sharp}^*}\right) \big( (1-\theta_0){\mathcal S}\big)^{\frac{N}{2s_\sharp}}=c^\ast.
$$

Thus, Propositions~\ref{P51} ensures the convergence of Palais-Smale sequences below the threshold~$c^\ast$, 
provided that~$\gamma$ is sufficiently small.
This allows us to use~\cite[Theorem 4.1]{MR4535437},
from which the desired result follows. 
\end{proof}

\section{Examples and applications}\label{EXA:A}

Note that the operator introduced in~\eqref{mainab} is very general and we can employ it to produce a wide number of new interesting existence results for critical problems, depending on the particular choice of the measure~$\mu$. We showcase some of these cases here below.\medskip

We start proving that, by choosing~$\mu$ in a proper way, 
our result can be compared to~\cite[Theorem 1.3]{MR4535437} and~\cite[Theorem 1.2]{MPSS}.
\begin{corollary}\label{ILS}
Let~$\lambda_l$ be the sequence of Dirichlet eigenvalues of~$-\Delta$
and~$2^* = 2N/(N-2)$ be the classical Sobolev exponent. 

If~$(a, b)\in Q_l$, $b <\nu_{l-1}(a)$ and
\begin{equation*}
\min \set{a,b} > \lambda_l -\frac{S}{|\Omega|^{2/N}}
\end{equation*}
where~$S$ as denotes the classical best Sobolev constant, then
problem
\begin{equation*}
\left\{\begin{aligned}
-\Delta \, u & = bu^+ - au^- + |u|^{2^* - 2}\, u && \text{in } \Omega,\\
u & = 0 && \text{in } \ \partial\Omega,
\end{aligned}\right.
\end{equation*}
possesses a nontrivial solution.
\end{corollary}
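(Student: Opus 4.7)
The plan is to derive Corollary~\ref{ILS} directly from Theorem~\ref{main1} by specializing to the Dirac measure $\mu := \delta_1$, in which case $A_\mu$ reduces to the classical Laplacian $-\Delta$. First, I would verify that the structural hypotheses \eqref{mu00}, \eqref{mu3} and \eqref{mu2} are satisfied. Taking $\overline{s} = 1$, one has $\mu^+([\overline{s},1]) = \mu^+(\{1\}) = 1 > 0$, confirming \eqref{mu00}, while $\mu^- \equiv 0$ makes \eqref{mu3} trivial and allows \eqref{mu2} to hold with $\gamma = 0$.

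Next, I would select $s_\sharp = 1$ in \eqref{scritico}, so that the fractional critical exponent $2^*_{s_\sharp}$ becomes the classical $2^* = 2N/(N-2)$ appearing in the statement. The eigenvalues $\lambda_l$ of $A_\mu$ are then precisely the Dirichlet eigenvalues of $-\Delta$, which is consistent with the Corollary.

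The key identification is that the generalized Sobolev constant $\mathcal{S}$ defined in \eqref{best_rho} coincides with the classical Sobolev constant $S$. Indeed, since $\mu^+(\{0\}) = 0$ and $\mu^+$ vanishes on $(0,1)$, the only surviving term in the infimum is $\mu^+(\{1\})\,\|\nabla u\|^2_{L^2(\Omega)} = \|\nabla u\|^2_{L^2(\Omega)}$, minimized over $u\in C^\infty_0(\Omega)$ with $\|u\|_{L^{2^*}(\R^N)} = 1$, which is exactly the variational characterization of $S$. Consequently, condition \eqref{sotto} reduces precisely to the assumption $\min\{a,b\} > \lambda_l - S/|\Omega|^{2/N}$ stated in the Corollary.

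With all hypotheses of Theorem~\ref{main1} in place, and with $\gamma = 0$ trivially lying in $[0,\gamma_0]$ for any $\gamma_0 > 0$, the conclusion of Theorem~\ref{main1} delivers a nontrivial weak solution $u \in \mathcal{X}(\Omega)$ of the corresponding problem~\eqref{mainab}, which in this specialization is exactly the Dirichlet problem stated in Corollary~\ref{ILS}. I do not foresee any technical obstacle in this derivation: the argument is a direct unwinding of definitions, and the only point that genuinely requires checking is the coincidence of $\mathcal{S}$ with the classical best Sobolev constant, which is immediate from \eqref{best_rho}.
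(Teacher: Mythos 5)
Your proposal is correct and follows exactly the same route as the paper's proof: set $\mu := \delta_1$, take $\overline{s} = s_\sharp = 1$ (so $\gamma = 0$), observe that $\mathcal{S}$ in~\eqref{best_rho} reduces to the classical Sobolev constant $S$, and invoke Theorem~\ref{main1}. The only difference is that you spell out the verification of~\eqref{mu00}, \eqref{mu3}, \eqref{mu2} in detail, whereas the paper records it in one line.
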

\begin{proof}
Let~$\mu:= \delta_1$ be the Dirac measure centred at the point~$1$. 
In this case~$\mu^-=0$ as well and~$\mu$ satisfies~\eqref{mu00}, \eqref{mu3}
and~\eqref{mu2}. 
Furthermore, we can take~$\overline s:=1$ and~$s_\sharp:=1$, so that~${\mathcal{S}}$ in~\eqref{best_rho} reduces to the classical Sobolev constant. The desired result now follows from
Theorem~\ref{main1}.
\end{proof}

To our best knowledge, our main result is new even for the case of the fractional Laplacian, and even for the case~$a=b$ in which the jumping nonlinearity is not present. For the reader convenience, we state this result here below:

\begin{corollary}\label{ILFRA}
Let~$s\in [{{ 0 }}, 1)$  
and~$2^*_s = (2N)/(N-2s)$ be the critical fractional Sobolev exponent. 

Denote by~$\lambda_l$ the sequence of Dirichlet eigenvalues of~$(-\Delta)^s$ and by~$S(s)$ 
the fractional Sobolev constant corresponding to~$(-\Delta)^s$.

If~$(a, b)\in Q_l$, $b <\nu_{l-1}(a)$ and
\begin{equation*}
\min \set{a,b} > \lambda_l - \frac{S(s)}{|\Omega|^{2s/N}},
\end{equation*}
then problem 
\begin{equation*}
\left\{\begin{aligned}
(- \Delta)^s\, u & = bu^+ - au^- + |u|^{2_{s}^\ast - 2}\, u && \text{in } \Omega,\\
u & = 0 && \text{in } \R^N \setminus \Omega,
\end{aligned}\right.
\end{equation*}
admits a nontrivial solution.
\end{corollary}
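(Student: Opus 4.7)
The plan is to specialize Theorem~\ref{main1} to the choice $\mu := \delta_s$, the Dirac measure concentrated at the fractional exponent $s$, in direct analogy with the proof of Corollary~\ref{ILS}. With this choice one has $\mu^+ = \delta_s$ and $\mu^- \equiv 0$, so that the operator $A_\mu$ collapses onto $(-\Delta)^s$ and no ``wrong sign'' contribution is present.

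First, I would verify the structural hypotheses \eqref{mu00}, \eqref{mu3}, \eqref{mu2} and \eqref{scritico}. Taking $\overline{s} := s$ and $s_\sharp := s$, one has $\mu^+([\overline s,1]) = \delta_s([s,1]) = 1 > 0$ and $\mu^+([s_\sharp,1]) = 1 > 0$; the condition $\mu^-|_{[\overline s,1]} = 0$ is automatic since $\mu^- \equiv 0$, and \eqref{mu2} reads $0 \le \gamma$, which holds for every $\gamma \ge 0$. In particular one can take $\gamma := 0$, so that the smallness requirement on $\gamma$ imposed by Theorem~\ref{main1} is trivially fulfilled and the threshold $\gamma_0$ plays no role here.

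Next, I would match the remaining ingredients with the objects in the statement. The Dirichlet eigenvalues of $A_\mu = (-\Delta)^s$ are precisely the $\lambda_l$ appearing in the corollary. Since $\mu^+ = \delta_s$ is supported at a single fractional exponent, the infimum defining $\mathcal{S}$ in \eqref{best_rho} reduces to
\[
\mathcal{S} = \inf \left\{ [u]_s^2 : u \in C^\infty_0(\Omega),\ \|u\|_{L^{2^*_s}(\R^N)} = 1 \right\},
\]
which is exactly the fractional Sobolev constant $S(s)$ associated with $(-\Delta)^s$. Hence the spectral gap hypothesis \eqref{sotto} becomes precisely the one assumed in the corollary, and all the conditions of Theorem~\ref{main1} are met; the conclusion then follows directly.

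I do not anticipate a genuine obstacle: the corollary is a clean specialization of the main theorem to a Dirac measure with vanishing negative part, and the only mild verification is that the ``generalized Sobolev constant'' $\mathcal{S}$ defined in \eqref{best_rho} truly collapses onto the classical fractional Sobolev constant $S(s)$, which follows at once from the evaluation of $\delta_s$ against the bracketed quantity. The remaining work is purely structural bookkeeping.
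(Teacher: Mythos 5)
Your proposal is correct and follows exactly the same route as the paper: take $\mu := \delta_s$, $\overline{s} := s$, $s_\sharp := s$, observe that $\mu^- \equiv 0$ so that $\gamma = 0$ works and the smallness threshold $\gamma_0$ is irrelevant, note that $\mathcal{S}$ in \eqref{best_rho} collapses to $S(s)$, and invoke Theorem~\ref{main1}. The paper's proof is a one-liner stating these choices; your write-up spells out the verification of \eqref{mu00}, \eqref{mu3}, \eqref{mu2} and \eqref{scritico} and the identification of $\mathcal{S}$ with $S(s)$, all of which is accurate bookkeeping with no gaps.
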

\begin{proof}
Here, one takes~$\mu:= \delta_s$, $\overline s:=s$ and~$s_\sharp:=s$,
and the desired result is a consequence of Theorem~\ref{main1}.
\end{proof}

Now we show how to relate our new results to~\cite[Theorem 1.4]{arXiv.2209.07502}:

\begin{corollary}\label{mix_corollary}
Let~$s\in [{{ 0 }}, 1)$. 
Denote by~$\lambda_l$ the sequence of Dirichlet eigenvalues of the mixed operator~$-\Delta +(-\Delta)^s$, by~$S$ the classical Sobolev constant and
by~$2^* = 2N/(N-2)$ the classical critical Sobolev exponent. 

Then, if~$(a, b)\in Q_l$, $b <\nu_{l-1}(a)$ and
\begin{equation*}
\min \set{a,b} > \lambda_l -\frac{S}{|\Omega|^{2/N}},
\end{equation*}
then problem \begin{equation}\label{mixedab}
\left\{\begin{aligned}
-\Delta \, u + (-\Delta)^s \, u & = bu^+ - au^- + |u|^{2^* - 2}\, u && \text{in } \Omega,\\
u & = 0 && \text{in } \R^N \setminus \Omega,
\end{aligned}\right.
\end{equation}
admits a nontrivial solution.
\end{corollary}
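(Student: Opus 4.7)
The plan is to apply Theorem~\ref{main1} with the specific choice of signed measure $\mu := \delta_1 + \delta_s$, where $\delta_t$ denotes the Dirac mass at $t$. With this choice, the operator $A_\mu$ of~\eqref{AMMU} equals $-\Delta + (-\Delta)^s$, so \eqref{mainab} specializes precisely to \eqref{mixedab}, and the eigenvalue sequence $\{\lambda_l\}$ of $A_\mu$ coincides with the one introduced in the statement of the corollary.

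First I would verify the structural assumptions on $\mu$. Since $\mu$ is nonnegative, $\mu^+ = \mu$ and $\mu^- \equiv 0$. Taking $\overline s := 1$ and $\gamma := 0$, condition~\eqref{mu00} reduces to $\mu^+(\{1\}) = 1 > 0$, while~\eqref{mu3} and~\eqref{mu2} become vacuous. In particular, the smallness requirement $\gamma \in [0,\gamma_0]$ in Theorem~\ref{main1} is automatically satisfied, regardless of the value of $\gamma_0$. I would then select the critical exponent $s_\sharp := 1 \in [\overline s, 1]$, which fulfills~\eqref{scritico} and produces $2_{s_\sharp}^\ast = 2N/(N-2) = 2^\ast$, matching the exponent appearing in~\eqref{mixedab}.

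The key comparison is between the generalized Sobolev constant $\mathcal S$ of~\eqref{best_rho} and the classical one $S$. For this choice of $\mu$, the infimum defining $\mathcal S$ reads
$$\mathcal S = \inf \left\{ \|\nabla u\|_{L^2(\Omega)}^2 + [u]_s^2 \;:\; u \in C_0^\infty(\Omega),\ \|u\|_{L^{2^\ast}(\R^N)} = 1 \right\},$$
with the term $[u]_s^2$ interpreted as $\|u\|_{L^2(\Omega)}^2$ when $s = 0$, in accordance with the convention recalled in footnote on page~\pageref{footpahetehfh}. Discarding the nonnegative extra seminorm immediately yields $\mathcal S \geq S$. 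Consequently, $\lambda_l - \mathcal S / |\Omega|^{2/N} \leq \lambda_l - S/|\Omega|^{2/N} < \min\{a,b\}$, and so condition~\eqref{sotto} holds.

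Combined with the assumptions $(a,b) \in Q_l$ and $b < \nu_{l-1}(a)$ carried over verbatim from the corollary, all hypotheses of Theorem~\ref{main1} are thereby verified, and the existence of a nontrivial solution to~\eqref{mixedab} follows at once. There is no substantial obstacle in this derivation: the only noteworthy point is that the freedom to pick $\gamma = 0$ trivializes the smallness condition on $\gamma$, which is precisely the technical heart of the general statement and the feature that would otherwise require care.
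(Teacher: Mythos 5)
Your proposal is correct and follows the same route as the paper: take $\mu = \delta_1 + \delta_s$, $\overline s = 1$, $s_\sharp = 1$, $\gamma = 0$, and invoke Theorem~\ref{main1}. The one place where you are actually more careful than the paper's own text is the comparison of $\mathcal S$ to $S$. The paper simply says the argument goes ``as done in the proof of Corollary~\ref{ILS}'', but that corollary uses $\mu = \delta_1$, for which $\mathcal S$ \emph{equals} $S$; here $\mu = \delta_1 + \delta_s$, so $\mathcal S \geq S$ only, and one must observe (as you do, by discarding the nonnegative extra term $[u]_s^2$ in the infimum) that this inequality goes in the right direction to make the corollary's hypothesis $\min\{a,b\} > \lambda_l - S/|\Omega|^{2/N}$ imply condition~\eqref{sotto}. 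That small step is the genuine content of the reduction, and you state it explicitly rather than by analogy.
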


\begin{proof}
We set~$
\mu:= \delta_1 +\delta_s$, where~$\delta_1$ and~$\delta_s$ denote the Dirac measures centered at the points~$1$ and~$s$ respectively. As done in the proof of Corollary~\ref{ILS},
we can take~$\overline s:=1$ and~$s_\sharp:=1$ and deduce the desired result from
Theorem~\ref{main1}.
\end{proof}

Interestingly, our setting is general enough to include also operators containing small terms with the ``wrong'' sign. As a paradigmatic example, we showcase the following result:

\begin{corollary}\label{mix_signed}
Let~$s\in [{{ 0 }}, 1)$ and~$\alpha\in\R$.
Denote by~$\lambda_l$ the sequence of Dirichlet eigenvalues of the mixed operator~$-\Delta - \alpha(-\Delta)^s$, by~$2^* = 2N/(N-2)$ the classical critical Sobolev exponent, and by~$S$ the classical Sobolev constant.

Let~$(a, b)\in Q_l$ and $b <\nu_{l-1}(a)$ and suppose that
\begin{equation*}
\min \set{a,b} > \lambda_l -\frac{S}{|\Omega|^{2/N}},
\end{equation*}

Then, there exists~$\alpha_0>0$, depending only on~$N$, $\Omega$, $a$ and~$b$, such that if~$\alpha\le\alpha_0$, then problem 
\begin{equation*}
\left\{\begin{aligned}
-\Delta \, u - \alpha(-\Delta)^s \, u & = bu^+ - au^- + |u|^{2^* - 2}\, u && \text{in } \Omega,\\
u & = 0 && \text{in } \R^N \setminus \Omega,
\end{aligned}\right.
\end{equation*} admits a nontrivial solution.\end{corollary}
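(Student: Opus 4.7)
The plan is to apply Theorem~\ref{main1} to the signed Borel measure $\mu := \delta_1 - \alpha\,\delta_s$, since then the associated superposition operator equals $A_\mu u = -\Delta u - \alpha(-\Delta)^s u$, which is exactly the operator appearing in the equation. The case $\alpha \le 0$ is already covered by the positive-measure framework (a minor variant of Corollary~\ref{mix_corollary}, in which $\mathcal{S}\ge S$ so the hypothesis on $\min\{a,b\}$ implies $\eqref{sotto}$ automatically), so the substantive range is $\alpha \in (0, \alpha_0]$, with the threshold $\alpha_0 > 0$ to be pinned down at the end.

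For this choice of $\mu$ one has $\mu^+ = \delta_1$ and $\mu^- = \alpha\,\delta_s$. I take $\overline{s} := 1$, so that $[\overline{s}, 1] = \{1\}$, and verify the three structural assumptions in turn: $\mu^+([\overline{s}, 1]) = 1 > 0$ yields $\eqref{mu00}$; since $s < 1$, one has $\mu^-(\{1\}) = 0$, giving $\eqref{mu3}$; and $\mu^-([0, \overline{s}]) = \alpha$ while $\mu^+([\overline{s}, 1]) = 1$, so setting $\gamma := \alpha$ makes $\eqref{mu2}$ trivial. The only admissible value for $s_\sharp\in[\overline{s},1]$ satisfying $\eqref{scritico}$ is $s_\sharp := 1$, so $2^*_{s_\sharp} = 2^*$, matching the critical exponent in the statement. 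Moreover, because $\mu^+$ is concentrated at $s = 1$, the generalized Sobolev quantity defined in $\eqref{best_rho}$ collapses to
\[
\mathcal{S} \;=\; \inf\Bigl\{\,\|\nabla u\|_{L^2(\Omega)}^2 \;:\; u \in C_0^\infty(\Omega),\; \|u\|_{L^{2^*}(\R^N)} = 1\,\Bigr\} \;=\; S,
\]
the classical Sobolev constant.

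With these identifications the hypothesis $\min\{a, b\} > \lambda_l - S/|\Omega|^{2/N}$ is exactly condition $\eqref{sotto}$, while the Dancer--\Fucik hypotheses $(a,b) \in Q_l$ and $b < \nu_{l-1}(a)$ transfer unchanged, since by convention the eigenvalues $\lambda_l$ and the spectral curve $\nu_{l-1}$ in the statement are those of the $\alpha$-dependent operator $A_\mu$ itself. Theorem~\ref{main1} then produces a threshold $\gamma_0 = \gamma_0(N, \Omega, s_\sharp, a, b) = \gamma_0(N, \Omega, a, b)$ such that any $\gamma \in [0, \gamma_0]$ yields a nontrivial solution of $\eqref{mainab}$; setting $\alpha_0 := \gamma_0$ and recalling $\gamma = \alpha$ closes the argument. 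I do not foresee any genuine obstacle here: the entire content is a matching of data, and the only point worth flagging is the bookkeeping showing that $\overline{s} = s_\sharp = 1$ forces the abstract constant $\mathcal{S}$ to agree with its classical counterpart, so that the smallness of the ``wrong sign'' parameter $\alpha$ is precisely the smallness of $\gamma$ tolerated by Theorem~\ref{main1}.
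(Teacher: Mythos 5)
Your proposal is correct and follows exactly the paper's proof: define $\mu := \delta_1 - \alpha\delta_s$, take $\overline{s} = s_\sharp = 1$, and apply Theorem~\ref{main1}. You have simply made explicit the verification of \eqref{mu00}, \eqref{mu3}, \eqref{mu2} with $\gamma = \alpha$ and the identification $\mathcal{S} = S$, which the paper leaves implicit.
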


\begin{proof}
We define~$\mu:= \delta_1 -\alpha\delta_s$, $\overline s:=1$ and~$s_\sharp:=1$.
Once again, the desired result follows from
Theorem~\ref{main1}.  
\end{proof}

One more interesting application arises taking $\mu$ as a convergent series of Dirac measures. On this matter, we provide the next two results:

\begin{corollary}\label{serie1}
Let~$1\ge s_0 > s_1> s_2 >\dots \ge 0$. 
Denote by~$\lambda_l$ the sequence of Dirichlet eigenvalues of the operator
\[
\sum_{k=0}^{+\infty} c_k (-\Delta)^{s_k} 
\qquad{\mbox{with $\,c_k\ge 0\,$ and }} \,\sum_{k=0}^{+\infty} c_k \in (0, +\infty), 
\]
by~$S_0$ the best Sobolev constant corresponding to the exponent $s_0$ and
by~$2_{s_0}^* = 2N/(N-2s_0)$ the critical Sobolev exponent. 

Then, if~$(a, b)\in Q_l$, $b <\nu_{l-1}(a)$ and
\begin{equation*}
\min \set{a,b} > \lambda_l -\frac{S_0}{|\Omega|^{2s_0/N}},
\end{equation*}
then problem \begin{equation*}
\left\{\begin{aligned}
\sum_{k=0}^{+\infty} c_k (-\Delta)^{s_k} u & = bu^+ - au^- + |u|^{2_{s_0}^* - 2}\, u && \text{in } \Omega,\\
u & = 0 && \text{in } \R^N \setminus \Omega,
\end{aligned}\right.
\end{equation*}
admits a nontrivial solution.\end{corollary}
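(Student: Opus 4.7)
The plan is to apply Theorem~\ref{main1} after realizing the given operator as an $A_\mu$ of the form \eqref{AMMU}. The natural candidate is the discrete measure
$$\mu := \sum_{k=0}^{+\infty} c_k \, \delta_{s_k},$$
which, thanks to the hypothesis $\sum_{k \ge 0} c_k \in (0,+\infty)$, is a well-defined finite Borel measure on $[0,1]$. Since every $c_k \ge 0$, we have $\mu^+ = \mu$ and $\mu^- \equiv 0$, so assumptions \eqref{mu3} and \eqref{mu2} are trivially satisfied with $\gamma = 0$. In particular, the smallness condition $\gamma \le \gamma_0$ of Theorem~\ref{main1} is automatic and imposes no restriction at all.

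Next, I would verify the positivity conditions \eqref{mu00} and \eqref{scritico}. Without loss of generality we may assume $c_0 > 0$; if not, simply relabel the sequence so that the leading index carries a positive weight, which affects neither the operator nor the critical exponent $2^*_{s_0}$ appearing in the statement. Setting $\overline{s} = s_\sharp = s_0$, we obtain $\mu^+([s_0,1]) \ge c_0 > 0$, so both \eqref{mu00} and \eqref{scritico} hold. A direct inspection of \eqref{AMMU} then gives
$$A_\mu u \;=\; \int_{[0,1]} (-\Delta)^s u\, d\mu(s) \;=\; \sum_{k=0}^{+\infty} c_k\,(-\Delta)^{s_k}u,$$
so that problem \eqref{mainab} reduces precisely to the series problem in the statement of the corollary.

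To transfer the spectral condition, I would compare the generalized Sobolev constant $\mathcal S$ from \eqref{best_rho} with the classical fractional Sobolev constant $S_0$ for the exponent $s_0$. Since in the variational characterization of $\mathcal S$ every summand $c_k [u]^2_{s_k}$ is nonnegative and the term $k = 0$ alone contributes $c_0 [u]^2_{s_0} \ge c_0 S_0$ on functions normalized in $L^{2^*_{s_0}}(\R^N)$, one has $\mathcal S \ge c_0 S_0$; in particular, under the natural normalization $c_0 \ge 1$ (or more generally whenever $\mathcal S \ge S_0$), the corollary's hypothesis
$$\min\{a,b\} > \lambda_l - \frac{S_0}{|\Omega|^{2 s_0/N}}$$
implies the spectral condition \eqref{sotto} required by Theorem~\ref{main1}. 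Once all hypotheses are verified, Theorem~\ref{main1} yields the desired nontrivial weak solution.

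The one technical point meriting care is the identification of $A_\mu u$ with the infinite series $\sum_k c_k (-\Delta)^{s_k} u$ inside the weak formulation of Definition~\ref{wsol2}; this is where I expect the only real work, and it follows from the absolute convergence of $\sum c_k$ combined with Lemma~\ref{nons}, which controls every seminorm $[u]_{s_k}$ uniformly in $k$ by $[u]_{s_0}$ (or by $[u]_1$ when $s_0=1$). This uniform control legitimates a Fubini-type interchange of the $\sum_k$ with the integral over $\R^{2N}$, placing the problem squarely within the framework of Sections~\ref{op_section}--\ref{sec_main1}.
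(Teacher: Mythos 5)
Your proof follows essentially the same route as the paper's: you build the same discrete measure $\mu=\sum_{k}c_k\,\delta_{s_k}$, pick $s_\sharp=s_0$, observe $\mu^-\equiv 0$ so $\gamma=0$, and invoke Theorem~\ref{main1}. The only structural difference is cosmetic: the paper takes $\overline s=0$ while you take $\overline s=s_0$; since $\mu^-\equiv 0$, either choice satisfies~\eqref{mu00}--\eqref{mu2} with $\gamma=0$, and both require $c_0>0$ so that~\eqref{scritico} holds with $s_\sharp=s_0$.

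Two points in your write-up deserve attention. First, the ``without loss of generality $c_0>0$ by relabeling'' is not innocuous: relabeling so that the top index carries positive weight replaces $s_0$ by the largest exponent with nonzero coefficient, which changes both $S_0$ and $2^*_{s_0}$ appearing in the corollary; the statement simply presupposes $c_0>0$, as does the paper's own one-line proof. Second, and more substantively, you correctly observe that the definition~\eqref{best_rho} only yields $\mathcal{S}\ge c_0\,S_0$, so the corollary's hypothesis $\min\{a,b\}>\lambda_l - S_0/|\Omega|^{2s_0/N}$ implies condition~\eqref{sotto} of Theorem~\ref{main1} only if $c_0\ge 1$ (or, at any rate, if $\mathcal{S}\ge S_0$). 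The corollary does not impose this normalization, and the single-atom example $\mu=c_0\delta_{s_0}$ with $c_0<1$ gives $\mathcal{S}=c_0 S_0<S_0$, so the deduction as written is not fully general. The paper's proof does not address this either, so this is a normalization issue in the corollary itself rather than a defect specific to your argument; still, your phrase ``under the natural normalization $c_0\ge 1$'' quietly introduces a hypothesis the statement does not contain, and a complete proof would either assume it explicitly or replace $S_0$ by $\mathcal{S}$ in the hypothesis. Your final concern about interchanging $\sum_k$ with the double integral is handled automatically by the framework of Sections~\ref{op_section}--\ref{KAMqw} (the weak formulation in Definition~\ref{wsol2} is already posed against $d\mu^\pm$), so no separate Fubini argument is required.
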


\begin{proof}
We set
\[
\mu:=\sum_{k=0}^{+\infty} c_k \,\delta_{s_k} , 
\]
where~$\delta_{s_k}$ denote the Dirac measures centered at each~$s_k$. In this case,
we can take~$\overline s:=0$ and~$s_\sharp:=s_0$ and deduce the desired result from
Theorem~\ref{main1}.
\end{proof}

\begin{corollary}\label{serie2}
Let~$1\ge s_0 > s_1> s_2 >\dots \ge 0$ and~$c_k\in\R$ for all~$k\in\N$ be such that
$$ \sum_{k=0}^{+\infty} c_k \in (0, +\infty).$$
Assume that there exists $\gamma\ge0$ and $\overline k\in\N$ such that 
\begin{equation}\label{serft87604}
c_k>0\ \text{ for all } k\in\{1,\dots, \overline k\}\quad \text{ and }\quad \sum_{k=\overline k +1}^{+\infty} c_k \le \gamma \sum_{k=0}^{\overline k} c_k.
\end{equation}
Denote by~$\lambda_l$ the sequence of Dirichlet eigenvalues of the operator
\[
\sum_{k=0}^{+\infty} c_k (-\Delta)^{s_k} 
\]
by~$S_0$ the best Sobolev constant corresponding to the exponent $s_0$ and
by~$2_{s_0}^* = 2N/(N-2s_0)$ the critical Sobolev exponent. 

Let~$(a, b)\in Q_l$ and~$b <\nu_{l-1}(a)$ and suppose that
\begin{equation*}
\min \set{a,b} > \lambda_l -\frac{S_0}{|\Omega|^{2s_0/N}},
\end{equation*}

Then, there exists~$\gamma_0>0$, depending only on~$N$, $\Omega$, $s_0$, $a$ and~$b$, such that if~$\gamma\in[0,\gamma_0]$
then problem \begin{equation*}
\left\{\begin{aligned}
\sum_{k=0}^{+\infty} c_k (-\Delta)^{s_k} u & = bu^+ - au^- + |u|^{2_{s_0}^* - 2}\, u && \text{in } \Omega,\\
u & = 0 && \text{in } \R^N \setminus \Omega,
\end{aligned}\right.
\end{equation*}
admits a nontrivial solution.\end{corollary}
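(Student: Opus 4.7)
The plan is to realize the series operator as~$A_\mu$ for the signed Borel measure
$$ \mu := \sum_{k=0}^{+\infty} c_k\,\delta_{s_k}, $$
with Jordan decomposition
$$ \mu^+ := \sum_{\set{k:\, c_k > 0}} c_k\, \delta_{s_k}, \qquad
\mu^- := \sum_{\set{k:\, c_k < 0}} |c_k|\,\delta_{s_k}, $$
and then to apply Theorem~\ref{main1} with the choices~$\overline{s} := s_{\overline{k}}$ and~$s_\sharp := s_0$.

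First I would verify the structural conditions~\eqref{mu00}, \eqref{mu3} and~\eqref{mu2} for this~$\mu$. The atoms of~$\mu$ inside~$[\overline{s},1]$ are precisely~$s_0, s_1, \dots, s_{\overline{k}}$, and by~\eqref{serft87604} (together with~$c_0\ge 0$, implicit in the hypothesis so that~$\sum_{k=0}^{\overline{k}} c_k$ is a genuinely positive mass) each of these carries nonnegative weight. This gives~\eqref{mu00}, since~$\mu^+([\overline{s},1])\ge c_1+\dots+c_{\overline{k}}>0$, and~\eqref{mu3}, since no negative atom of~$\mu$ sits in~$[\overline{s},1]$. For~\eqref{mu2}, restricting the algebraic tail estimate in~\eqref{serft87604} to the atoms carrying negative weight yields
$$ \mu^-([0,\overline{s}]) = \sum_{\set{k>\overline{k}:\, c_k<0}} |c_k| \le \gamma \sum_{k=0}^{\overline{k}} c_k = \gamma\,\mu^+([\overline{s},1]), $$
which is exactly~\eqref{mu2}.

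Next, I would match the spectral threshold. With~$s_\sharp = s_0$, the critical exponent in Theorem~\ref{main1} becomes~$2^*_{s_0} = 2N/(N-2s_0)$, which matches the nonlinearity in the corollary. The generalized Sobolev constant~$\mathcal{S}$ from~\eqref{best_rho} is bounded below by~$c_0\,S_0$, by dropping from the infimum all the contributions except that of the single atom at~$s_0$, so the hypothesis on~$\min\set{a,b}$ in the corollary (modulo a standard normalization of the coefficients, or after absorbing~$c_0$ into~$S_0$) implies the spectral condition~\eqref{sotto} needed by Theorem~\ref{main1}. A direct application of Theorem~\ref{main1} then produces the nontrivial solution, provided~$\gamma\in[0,\gamma_0]$ for~$\gamma_0=\gamma_0(N,\Omega,s_0,a,b)$ supplied by that theorem.

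The step I expect to be the main obstacle is the passage from~\eqref{serft87604}, which controls only the algebraic tail sum~$\sum_{k>\overline{k}} c_k$, to the absolute-value bound on~$\mu^-$ demanded by~\eqref{mu2}. The sign-splitting argument above closes this gap whenever one reads~\eqref{serft87604} in the natural way on the negative atoms, which is the intended reading; if one wished to allow positive and negative weights to coexist freely in the tail, a mild auxiliary bound on~$\sum_{k>\overline{k},\, c_k>0} c_k$ would make the matching with~\eqref{mu2} fully transparent. Once this point is settled, no further PDE analysis is required: the whole content of Corollary~\ref{serie2} is already encoded in Theorem~\ref{main1}.
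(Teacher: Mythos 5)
Your plan --- set $\mu := \sum_k c_k\,\delta_{s_k}$, take $\overline{s} := s_{\overline{k}}$ and $s_\sharp := s_0$, and invoke Theorem~\ref{main1} --- is exactly the paper's proof, which is a one-liner asserting that \eqref{serft87604} yields \eqref{mu00}, \eqref{mu3} and \eqref{mu2}. The interesting content of your write-up is that you actually try to verify \eqref{mu2}, and the obstacle you flag there is a genuine gap in the hypothesis, not just a matter of charitable reading.

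Concretely, \eqref{mu2} requires $\mu^-([0,\overline{s}]) = \sum_{k>\overline{k},\, c_k<0} |c_k| \le \gamma\,\mu^+([\overline{s},1])$, an absolute-value tail bound, whereas \eqref{serft87604} controls only the \emph{signed} tail sum $\sum_{k>\overline{k}} c_k$. These are incomparable: in the presumably intended regime where the tail coefficients are nonpositive, $\sum_{k>\overline{k}} c_k \le 0$ and the inequality in \eqref{serft87604} holds vacuously for every $\gamma\ge 0$, while $\mu^-([0,\overline{s}])$ can be arbitrarily large. So one cannot ``restrict the algebraic estimate to the negative atoms'' to get an upper bound on their total mass; that step fails, and the condition one actually needs is $\sum_{k>\overline{k}} |c_k| \le \gamma \sum_{k=0}^{\overline{k}} c_k$. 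Two further points you raise are also correct: $c_0 > 0$ must be assumed (otherwise $\mu^-$ has an atom at $s_0 \in [\overline{s},1]$ and \eqref{mu3} fails), so the index set in \eqref{serft87604} should begin at $k=0$; and the threshold in the corollary is stated using $S_0$ while Theorem~\ref{main1} requires $\mathcal{S}$, and the crude bound $\mathcal{S} \ge c_0 S_0$ matches them only when $c_0 \ge 1$, so a normalization is implicitly needed. None of this changes the shape of the argument --- it is still a direct specialization of Theorem~\ref{main1} --- but the corollary as stated, and the one-line proof the paper gives, leave these verifications unresolved, and you were right to call attention to them rather than silently passing over them.
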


\begin{proof}
We set
\[
\mu:=\sum_{k=0}^{+\infty} c_k \,\delta_{s_k}
\]
where~$\delta_{s_k}$ denote the Dirac measures centered at each~$s_k$. 

Notice that~\eqref{serft87604} guarantees that
the assumptions on~$\mu$ in~\eqref{mu00},
\eqref{mu3} and~\eqref{mu2} are satisfied.
Thus, we can take~$\overline s:=s_{\overline k}$ and~$s_\sharp:=s_0$ and infer the desired result from
Theorem~\ref{main1}.
\end{proof}

It is worth nothing that the wide generality of our setting enables us to address also the case of the continuous superposition of fractional operators. To be more precise, the following result holds true.

\begin{corollary}\label{function}
Let~$s_\sharp\in [{{ 0 }}, 1)$, $\gamma\ge0$ and~$f$ be a measurable and non identically zero function such that
\begin{equation}\label{fun-int}\begin{split} &
{\mbox{$f\ge 0$ in~$(s_\sharp,1)$,}}\\&
\int_{s_\sharp}^1 f(s) \,ds >0
\\ {\mbox{and }}\qquad&
\int_0^{s_\sharp} \max\{0,-f(s)\} \,ds \le\gamma \int_{s_\sharp}^1 f(s) \,ds .
\end{split}\end{equation}

Denote by~$\lambda_l$ the sequence of Dirichlet eigenvalues of the operator 
\begin{equation}\label{djweiohtgierogh597865432}
\int_0^1 f(s) (-\Delta)^s \, u \, ds ,\end{equation}
 by~$S_\sharp$ the best Sobolev constant corresponding to the exponent $s_\sharp$ and
by~$2_{s_\sharp}^* = 2N/(N-2s_\sharp)$ the fractional critical Sobolev exponent. 

Let~$(a, b)\in Q_l$ and $b <\nu_{l-1}(a)$ and suppose that
\begin{equation*}
\min \set{a,b} > \lambda_l -\frac{S_\sharp}{|\Omega|^{2s_\sharp/N}},
\end{equation*}

Then, there exists~$\gamma_0>0$, depending only on~$N$, $\Omega$, $s_\sharp$, $a$ and~$b$, such that if~$\gamma\in[0,\gamma_0]$
then problem \begin{equation*}
\left\{\begin{aligned}
\int_0^1 f(s) (-\Delta)^s \, u \, ds & = bu^+ - au^- + |u|^{2_{s_\sharp}^* - 2}\, u && \text{in } \Omega,\\
u & = 0 && \text{in } \R^N \setminus \Omega,
\end{aligned}\right.
\end{equation*}
admits a nontrivial solution.\end{corollary}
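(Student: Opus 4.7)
The plan is to reduce Corollary~\ref{function} to Theorem~\ref{main1} by taking $\mu$ to be the absolutely continuous signed measure on $[0,1]$ with density $f$, that is $d\mu(s) = f(s)\,ds$, with Hahn--Jordan decomposition $d\mu^+(s) = f^+(s)\,ds$ and $d\mu^-(s) = f^-(s)\,ds$, where $f^\pm := \max\{\pm f,0\}$. With this choice, the operator $A_\mu$ in~\eqref{AMMU} coincides with the integral operator in~\eqref{djweiohtgierogh597865432}, and problem~\eqref{mainab} is exactly the boundary value problem in the corollary's statement.

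The main task is then to verify the three structural hypotheses~\eqref{mu00}, \eqref{mu3} and~\eqref{mu2} with the choice $\overline s := s_\sharp$, and each of them follows immediately from a corresponding line of~\eqref{fun-int}. Indeed, $f \ge 0$ a.e.\ on $(s_\sharp,1)$ forces $f^- = 0$ a.e.\ on $[s_\sharp,1]$, hence ${\mu^-}_{\big|_{[s_\sharp,1]}} = 0$, which is~\eqref{mu3}; the strict positivity $\int_{s_\sharp}^1 f(s)\,ds > 0$ rewrites as $\mu^+([s_\sharp,1])>0$, which is~\eqref{mu00}; and the last line of~\eqref{fun-int} translates verbatim into $\mu^-([0,s_\sharp]) \le \gamma\, \mu^+([s_\sharp,1])$, which is~\eqref{mu2}. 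The eigenvalues $\lambda_l$ in the statement coincide with those of $A_\mu$, and the exponent $s_\sharp$ appearing in the corollary plays the role of the critical parameter in~\eqref{scritico}, since by~\eqref{fun-int} we have $\mu^+([s_\sharp,1]) > 0$.

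Once these hypotheses are checked, a direct application of Theorem~\ref{main1} produces the desired nontrivial solution for all $\gamma \in [0,\gamma_0]$, with $\gamma_0$ depending only on the data $N$, $\Omega$, $s_\sharp$, $a$, $b$. The only delicate point is to match the generalized Sobolev constant $\mathcal S$ of~\eqref{best_rho} attached to this specific $\mu^+$ with the classical fractional Sobolev constant $S_\sharp$ that appears in the corollary's hypothesis on $\min\{a,b\}$, so that the assumption of the corollary implies the condition~\eqref{sotto} needed by the theorem. This is where Lemma~\ref{nons} enters: for every $s \in [s_\sharp,1]$ and every $u\in C_0^\infty(\Omega)$ one has $[u]_{s_\sharp}^2 \le c(N,\Omega)^2 [u]_s^2$, so that
$$
\int_{[0,1]}[u]_s^2\,d\mu^+(s) \;\ge\; \int_{s_\sharp}^1[u]_s^2\,f(s)\,ds \;\ge\; \frac{\int_{s_\sharp}^1 f(s)\,ds}{c(N,\Omega)^2}\,[u]_{s_\sharp}^2,
$$
which, combined with the standard fractional Sobolev inequality $[u]_{s_\sharp}^2 \ge S_\sharp \|u\|_{L^{2^*_{s_\sharp}}}^2$, provides the quantitative lower bound on $\mathcal S$ needed for the theorem's threshold to be controlled by the corollary's threshold.

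The heart of the proof is therefore purely a change of notation: expressing the absolutely continuous signed density $f$ in measure-theoretic language and tracking how the three analytic conditions in~\eqref{fun-int} correspond exactly to the three conditions~\eqref{mu00}--\eqref{mu2}. The only mildly technical step is the above constant comparison between $\mathcal S$ and $S_\sharp$; the hard work has already been absorbed into Theorem~\ref{main1} and Lemma~\ref{nons}, which were designed precisely to accommodate this level of generality.
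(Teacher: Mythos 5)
Your core reduction is exactly the one the paper uses: set $d\mu(s)=f(s)\,ds$ so that $A_\mu$ coincides with the integral operator in \eqref{djweiohtgierogh597865432}, observe that the three lines of \eqref{fun-int} translate respectively into \eqref{mu3}, \eqref{mu00} and \eqref{mu2} with the choice $\overline s:=s_\sharp$, and then invoke Theorem~\ref{main1}. The paper's proof of Corollary~\ref{function} consists of precisely these observations and nothing more.

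Your third paragraph, on reconciling the generalized constant $\mathcal S$ of \eqref{best_rho} with the fractional Sobolev constant $S_\sharp$, addresses a point the paper's proof is silent on, and you are right to flag it: for the corollary's hypothesis on $\min\{a,b\}$ to entail \eqref{sotto}, one needs $\mathcal S\ge S_\sharp$. However, the chain of inequalities you display only yields
\begin{equation*}
\mathcal S\;\ge\;\frac{1}{c(N,\Omega)^2}\left(\,\int_{s_\sharp}^1 f(s)\,ds\right) S_\sharp,
\end{equation*}
and this prefactor need not be at least $1$: replacing $f$ by $\varepsilon f$ with $\varepsilon>0$ small preserves every hypothesis in \eqref{fun-int} but makes the factor arbitrarily small. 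So the estimate via Lemma~\ref{nons}, as written, does not deliver the comparison you claim it "provides." Note that the paper's own proof does not attempt this comparison at all, so the approach you take still matches the source; the inconclusive paragraph is an addition that should either be sharpened or dropped rather than asserted as settling the matter.
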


\begin{proof}
We observe that the operator in~\eqref{djweiohtgierogh597865432}
is a particular case of~$A_\mu$ as defined in~\eqref{AMMU}, where~$d\mu(s)$
boils down to~$f(s)\,ds$.

Additionally, \eqref{fun-int} guarantees that the assumptions
in~\eqref{mu00}, \eqref{mu3} and~\eqref{mu2} are satisfied.
Thus, we can take~$\overline s:=s_\sharp$, which plays the role of the fractional critical exponent. The desired result then follows from Theorem~\ref{main1}.
\end{proof}

\section*{Acknowledgements} 

SD and EV are members of the Australian Mathematical Society (AustMS).
EV is supported by the Australian Laureate Fellowship FL190100081 ``Minimal surfaces, free boundaries and partial differential equations''.

CS is member of INdAM-GNAMPA.

This work was partially completed while KP was visiting the Department of Mathematics and Statistics at the University of Western Australia, and he is grateful for the hospitality of the host department. His visit to the UWA was supported by the Simons Foundation Award 962241 ``Local and nonlocal variational problems with lack of compactness''.

\vfill
\end{document}